\newcommand{\stkout}[1]{\ifmmode\text{\sout{\ensuremath{#1}}}\else\sout{#1}\fi}
\colorlet{Changes@Color}{red}
\numberwithin{equation}{section}
\newtheorem{prop}{Proposition}
\newtheorem{lemma}[prop]{Lemma}
\newtheorem{thm}[prop]{Theorem}
\newtheorem*{thm*}{Theorem}
\newtheorem{cor}[prop]{Corollary}
\newtheorem{conj}[prop]{Conjecture}
\numberwithin{prop}{section}
\newtheorem{defn}[prop]{Definition}
\theoremstyle{definition}
\newtheorem{rmk}[prop]{Remark}
\newenvironment{manualtheorem}[1]{%
	\manualtheoreminner
}{\endmanualtheoreminner}
\definecolor{c1}{rgb}{0.2,0.4,0.5}
\definecolor{c2}{rgb}{0.1,0.3,0.5}
\definecolor{c3}{rgb}{0.2,0.7,0.5}
\newcommand{\oo}[1]{\overline{#1}}
\newcommand{\nab}{\nabla}
\newcommand{\bC}{\mathbb{C}}
\newcommand{\bB}{\mathbb{B}}
\DeclareMathOperator{\Real}{Re}
\DeclareMathOperator{\Imaginary}{Im}
\DeclareMathOperator{\length}{length}
\begin{document}

\title[]{Algebraic Bergman kernels and finite type domains in $\bC^2$}

\begin{abstract}
Let $G \subset \mathbb{C}^2$ be a smoothly bounded  pseudoconvex domain and assume that the Bergman kernel of $ G $ is algebraic of degree $d$. We show that the boundary $\partial G $ is of finite type and the type $r$ satisfies $r\leq 2d$. The inequality is optimal as equality holds for the egg domains $\{|z|^2+|w|^{2s}<1\},$ $s \in \mathbb{Z}_+$, by D'Angelo's explicit formula for their Bergman kernels.
Our results imply, in particular, that a smoothly bounded  pseudoconvex domain $G \subset \mathbb{C}^2$ cannot have rational Bergman kernel unless it is strongly pseudoconvex and biholomorphic to the unit ball by a rational map. Furthermore,  we show that if the Bergman kernel of $G$ is rational of the form$\frac{p}{q}$, reduced to lowest degrees, 
then its rational degree $\max\{\deg p, \deg q \}\geq 6$. Equality is achieved if and only if $ G $ is biholomorphic to the unit ball by a complex affine transformation of $\mathbb{C}^2$.
\end{abstract}

\subjclass[2010]{32F45, 32Q20, 32E10,32C20}


\author [Ebenfelt]{Peter Ebenfelt}
\address{Department of Mathematics, University of California at San Diego, La Jolla, CA 92093, USA} \email{{pebenfelt@ucsd.edu}}

\author[Xiao]{Ming Xiao}
\address{Department of Mathematics, University of California at San Diego, La Jolla, CA 92093, USA}
\email{{m3xiao@ucsd.edu}}

\author [Xu]{Hang Xu}
\address{Department of Mathematics, University of California at San Diego, La Jolla, CA 92093, USA}
\email{{h9xu@ucsd.edu}}

\thanks{The first author was supported in part by the NSF grant DMS-1900955. The second author was supported in part by the NSF grants DMS-1800549 and DMS-2045104.}

\maketitle

\section{Introduction}
The Bergman kernel of a domain in complex space is an object of fundamental importance in several complex variables and complex geometry. 
A classical and important problem is to study the geometry of the domain and its boundary in terms of properties of the Bergman kernel and metric. For example, as a consequence of a series of papers \cite{FuWo, NeSh, HX}, the unit ball may be characterized among bounded strongly pseudoconvex domains $\Omega$ in $\bC^n$ in terms of the K\"ahler-Einstein property of its Bergman metric. This is the well-known Cheng Conjecture. Another example is the Ramadanov Conjecture, which asserts that the logarithmic term in Fefferman's asymptotic expansion of the Bergman kernel vanishes if and only if the boundary $\partial\Omega$ is spherical. The Ramadanov Conjecture has been proved for domains in $\bC^2$ (\cite{Graham1987b}, see also \cite{CE2, CE1} for a strong version) but is still open for $n\geq 3$.
In \cite{EbenfeltXiaoXu2020algebraicity}, the current authors introduced a new characterization of the unit ball $\bB^2$ in $\bC^2$ in terms of algebraicity and rationality of the Bergman kernel.

\begin{manualtheorem} {1} [\cite{EbenfeltXiaoXu2020algebraicity}] \label{thm old}
{\em Let $G$ be a bounded domain in $\mathbb{C}^2$ with smooth strongly pseudoconvex boundary. Then the Bergman kernel of $G$ is rational (resp.\ algebraic) if and only if there is a rational (resp.\ algebraic) biholomorphic map from $G$ to $\mathbb{B}^2$.}
\end{manualtheorem}

It is in general a difficult problem to compute explicit formulas for the Bergman kernel and such formulas have only been successfully obtained in very special situations. The most well-known case of a closed expression for the Bergman kernel is that of the unit ball $\bB^n$:
\begin{equation*}
	K_{\mathbb{B}^n}(z, \bar{z})=\frac{n!}{\pi^n} \frac{1}{(1-|z|^2)^{n+1}}.
\end{equation*}
We note that $K_{\mathbb{B}^n}(z, z)$ is rational and has rational degree $2n+2$ (see Remark \ref{rmk rational}).
In 1978, D'Angelo \cite{DA78}  considered the egg domain
\begin{equation}\label{Egg}
E_s =\{|z|^2+|w|^{2s}<1\}\subset \mathbb{C}^2,\quad s\in \mathbb{R}_+,
\end{equation}
and obtained an explicit formula for its Bergman kernel:
\begin{equation}\label{BK for egg domain}
K_{E_s}\bigl( (z,w), \oo{(z,w)} \bigr)=\sum_{k=0}^2 c_k \frac{(1-|z|^2)^{-2+\frac{k}{s}}}{\bigl((1-|z|^2)^{\frac{1}{s}}-|w|^2 \bigr)^{1+k}},
\end{equation}
where $c_0=0, c_1=\frac{1}{\pi^2}\cdot\frac{s-1}{s}$, and $c_2=\frac{1}{\pi^2}\cdot \frac{2}{s}$. We note that when $s \in \mathbb{Z}_+$, the egg domain $E_s$ is smoothly bounded and its boundary is of finite type with type $r=2s$. Furthermore, in this case the Bergman kernel $K_{E_s}$ is algebraic of degree $s$ (see Definition \ref{def algebraic function} and \ref{def algebraic degree and total degree}, and Remark \ref{rmk16}). Since the work of D'Angelo \cite{DA78}, many authors have contributed explicit calculations of Bergman kernels. We refer the reader to \cite{DA94, BoFuSt, Pa08, Huo, CKMM} and the references therein.

The first main result of this paper concerns the characterizations in Theorem 1 above in the context of weakly pseudoconvex domains. It is clear from D'Angelo's formula for $K_{E_s}$ that Theorem 1 for algebraic Bergman kernels fails in this context. Instead, our first main result shows that the domain must be of finite type and a sharp bound on the type is given in terms of the algebraic degree of its Bergman kernel. In the case where the Bergman kernel is rational, the established bound implies that the domain is strongly pseudoconvex and the characterization of the unit ball in Theorem 1 remains.

To describe our results more precisely, we first recall some background materials on algebraic functions. For more details, the reader is referred to \cite[Chapter 5.4]{BER}.


\begin{defn}[Algebraic functions]\label{def algebraic function}
{\rm	A real-analytic function $f(t)$ in $U\subseteq \mathbb{R}^n$ is} algebraic, {\rm if there exists a non-trivial polynomial $P(t,Y)=\sum_{j=0}^N\alpha_j(t) Y^j \in \mathbb{R}[t,Y]$ such that $P(t,f(t))\equiv 0$ for all $t=(t_1,\cdots,t_n)\in U$.}
\end{defn}

\begin{rmk}
	Given a non-trivial algebraic function $f(t)$ on $U$, the ideal $I=\{P(t,Y)\in \mathbb{R}[t,Y]: P(t,f(t))\equiv 0 \mbox{ on } U \}$ is a principal ideal in $\mathbb{R}[t,Y]$. Thus we can write $I=(P^*(t,Y))$ for some generator $P^*\in \mathbb{R}[t,Y]$. We shall call $P^*$ a \emph{minimal polynomial} of $f$. The minimal polynomial is unique up to scaling by nonzero real numbers.
\end{rmk}

\begin{defn}[Algebraic/Total degree]\label{def algebraic degree and total degree}
{\rm	Let $f$ be a non-trivial algebraic function in $U\subset \mathbb{R}^n$. Let $P^*=P^*(t,Y)\in \mathbb{R}[t,Y]$ be the minimal polynomial of $f(t)$.	
	\begin{itemize}
		\item[(i)] The {\em algebraic degree} of $f$ is defined to be $d$, the degree of $P^*$ in $Y$. Equivalently, $d$ is the degree of the field extension $[\mathbb{R}(t, f(t)): \mathbb{R}(t)]$.
		\item[(ii)] The {\em total degree} of $f$ is defined to be the degree of $P^*$ as a polynomial in $(t,Y)$.
	\end{itemize}
}
\end{defn}

\begin{rmk}\label{rmk rational}
We note that an algebraic function $f$ is rational if and only if its algebraic degree is $1$.
In this case, we can write $f(t)=\frac{p(t)}{q(t)}$ for some $p(t), q(t)\in \mathbb{R}[t]$  reduced to lowest degree, i.e., $\gcd (p(t),q(t))=1$ in $\mathbb{R}[t]$. Then $q(t)Y-p(t)$ is a minimal polynomial of $f$ and the total degree of $f$ is $\max\{1+\deg q, \deg p \}$. The integer $\max\{\deg q, \deg p \}$ is the {\em rational degree} of $f$.
\end{rmk}

We also recall that the two standard notions of {\em finite type} (Kohn/Bloom--Graham and D'Angelo) of a smooth real hypersurface $M$ at a point $\xi$ coincide in $\bC^2$. The reader is referred to \cite{BER} for this fact and precise definitions. Roughly, the type of $M$ at $\xi$ is the maximal order of contact with a complex curve (D'Angelo) or the minimal number of commutators of complex tangential vector fields needed to span the full tangent space (Kohn/Bloom--Graham).

We shall now present our main results. The first result concerns the algebraic degree of the Bergman kernel.
\begin{thm}\label{main thm}
	Let $ G \subset \mathbb{C}^2$ be a smoothly bounded pseudoconvex domain and assume the Bergman kernel $K$ of $ G $ is algebraic. Then the boundary $\partial G $ is real algebraic and of finite type. Moreover, the algebraic degree $d$ of $K$ and the type $r(\xi)$ of $\partial  G $ at $\xi\in\partial G $ satisfy
		\begin{equation}\label{main inequality}
		\max_{\xi\in \partial G }\, r(\xi)\leq 2d.
		\end{equation}
\end{thm}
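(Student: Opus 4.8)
The plan is to establish the three conclusions --- real algebraicity of $\partial G$, finite type, and the quantitative bound \eqref{main inequality} --- essentially in that logical order, the last being the heart of the matter. For the real algebraicity, let $P(z,\bar z,Y)=\sum_{j=0}^{d}\alpha_j(z,\bar z)Y^j$ with $\alpha_j\in\mathbb{R}[z,\bar z]$ and $\alpha_d\not\equiv 0$ be a minimal polynomial of $K$, so that $\sum_{j=0}^d\alpha_j(z,\bar z)K(z,z)^j\equiv 0$ on $G$. Since $G$ is bounded and pseudoconvex, it is classical (Pflug) that $K(z,z)\to\infty$ as $z\to\partial G$. Dividing the defining relation by $K(z,z)^d$ gives $\alpha_d(z,\bar z)=-\sum_{j=0}^{d-1}\alpha_j(z,\bar z)K(z,z)^{\,j-d}$, and letting $z\to\xi\in\partial G$ the right-hand side tends to $0$, because the $\alpha_j$ are bounded near $\partial G$ and each $K^{\,j-d}\to 0$; hence $\alpha_d\equiv 0$ on $\partial G$. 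Thus $\partial G$ lies in the real algebraic set $\{\alpha_d=0\}$, and since $\partial G$ is a smooth real hypersurface of the same dimension, a standard argument (passing to the irreducible component of $\{\alpha_d=0\}$ through a given boundary point) shows that $\partial G$ is real algebraic.

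A real algebraic hypersurface is in particular real-analytic, so the Diederich--Fornaess theorem on bounded pseudoconvex domains with real-analytic boundary gives that $\partial G$ is of finite type at every point; moreover, for a pseudoconvex hypersurface in $\mathbb{C}^2$ the type at each point is even. Once the pointwise bound below is proved, $\sup_\xi r(\xi)\le 2d<\infty$, and this supremum is attained by upper semicontinuity of the type on the compact set $\partial G$.

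For the bound, fix $\xi\in\partial G$, write $r=r(\xi)=2m$, and let $\delta(z)=\dist(z,\partial G)$. Let $\ell(t)=\xi+t\nu$, $0<t<\eps$, be the inner normal segment at $\xi$ ($\nu$ the inner unit normal), so $\delta(\ell(t))\asymp t$. The restriction $g(t):=K(\ell(t),\ell(t))$ satisfies $P(\ell(t),\overline{\ell(t)},g(t))\equiv 0$, which (for a generic real-algebraic transverse curve in place of $\ell$ if the normal line is exceptional) is a nontrivial polynomial identity of degree $\le d$ in $Y$; hence $g$ is algebraic over $\mathbb{R}(t)$ of algebraic degree $q'\le d$. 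On the other hand, the Nagel--Rosay--Stein--Wainger estimates for the Bergman kernel of a smooth bounded pseudoconvex finite type domain in $\mathbb{C}^2$ give $K(z,z)\asymp \delta(z)^{-2}\,\tau\!\bigl(z,\delta(z)\bigr)^{-2}$ near $\xi$; since $\xi$ has type exactly $r$, one has $\tau(\ell(t),t)\asymp t^{1/r}$, and therefore $g(t)\asymp t^{-2-2/r}=t^{-(2m+1)/m}$ as $t\to 0^+$. Being algebraic, $g$ has a convergent Puiseux expansion $g(t)=t^{k_0/e}\bigl(a_{k_0}+a_{k_0+1}t^{1/e}+\cdots\bigr)$ at $t=0^+$ with $a_{k_0}\neq 0$ and ramification index $e\le q'\le d$; the two-sided bound forces $k_0/e=-(2m+1)/m$ exactly (with $a_{k_0}>0$), and since $\gcd(2m+1,m)=1$ this forces $m\mid e$. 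Hence $m\le e\le d$, i.e.\ $r(\xi)=2m\le 2d$; taking the maximum over $\xi\in\partial G$ yields \eqref{main inequality}. (The specialization $d=1$ reproduces the rational-kernel corollary: then $e=1$, so $-2-2/r$ must be an integer, forcing $r=2$ everywhere.)

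The crux is the third step. One must (i) know that restricting the algebraic function $K$ to a transverse real-algebraic curve keeps it algebraic of algebraic degree at most $d$, (ii) extract from the Nagel--Rosay--Stein--Wainger estimates the \emph{precise} blow-up rate $t^{-2-2/r}$ along the normal, which rests on the identification $\tau(\ell(t),t)\asymp t^{1/r}$ at a point of type exactly $r$, and (iii) upgrade a mere two-sided size estimate into information about the leading Puiseux exponent --- this is what actually converts ``growth rate'' into ``algebraic degree.'' Steps (i) and (ii) are comparatively routine; the one delicate point outside the third step is the passage from ``$\partial G\subseteq\{\alpha_d=0\}$'' to ``$\partial G$ is real algebraic,'' which needs a little care at singular points of $\{\alpha_d=0\}$.
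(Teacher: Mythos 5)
Your proof is correct and arrives at the same bound, but by a genuinely different route at the decisive step. The preliminaries match: real algebraicity of $\partial G$ is obtained by the same argument (divide the minimal-polynomial relation by $K^d$ and send $z\to\partial G$ to force $\alpha_d|_{\partial G}\equiv 0$; the paper cites Proposition 5.1 of \cite{EbenfeltXiaoXu2020algebraicity}), and finite type then follows from Diederich--Fornaess for compact real-analytic hypersurfaces, exactly as in the paper. The divergence is in the inequality. The paper restricts $K$ to a transversal real line $L$ chosen so that $\alpha_d|_L\not\equiv 0$, substitutes the full Hsiao--Savale asymptotic expansion $K|_L = t^{-2-2/r}\bigl(\sum_{j=0}^{r-1}a_j t^{j/r}+O(t)\bigr)+b(t)\log t$ (with $a_0>0$) from \cite{HsSa20} into the polynomial identity, clears the power of $t$, and deduces from the elementary Lemma~\ref{vanishing lemma lem} that $2d<r$ would force every $\alpha_j|_L$ to vanish to infinite order, a contradiction. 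You instead invoke only the coarser Nagel--Rosay--Stein--Wainger two-sided bound $K\asymp t^{-2-2/r}$ and let the algebraic structure of $K$ carry the rest: the restriction $g=K|_\ell$ is algebraic over $\mathbb{R}(t)$ of degree $\le d$, hence its Laurent--Puiseux expansion has ramification index $e\le d$; the forced leading exponent $-(2m+1)/m$ (with $r=2m$) together with $\gcd(2m+1,m)=1$ gives $m\mid e$, whence $r=2m\le 2e\le 2d$. The trade-off is transparent: the paper uses a stronger analytic input (a full fractional-power expansion with a positive leading coefficient) but a completely elementary coefficient-matching lemma, while you get by with the older two-sided NRSW estimates but import the Newton--Puiseux/ramification theory of algebraic functions to convert a mere growth rate into a degree bound. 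Both routes are sound. One point to make explicit in your write-up: when the normal segment is replaced by a generic transversal curve to guarantee $\alpha_d|_\ell\not\equiv 0$, you still need $\delta(\ell(t))\asymp t$ and $\tau(\ell(t),t)\asymp t^{1/r}$ along that curve; these follow from transversality and the smoothness of the Levi-form quantities $\Lambda_j$ entering $\tau$ (namely $\Lambda_j(\ell(t))=O(t)$ for $j<r$ and $\Lambda_r(\ell(t))\asymp 1$), and spelling this out is what fully closes the gap with the paper's ``choose a good line'' step.
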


\begin{rmk}\label{rmk16}
The inequality \eqref{main inequality} is optimal. This can be seen from the unit ball $\mathbb{B}^2$, which is strongly pseudoconvex (i.e., $r(\xi)=2$ for any $\xi\in \partial \mathbb{B}^n$) and has a rational Bergman kernel (i.e., $d=1$). More generally, consider the egg domain $G=E_s$ given by \eqref{Egg} for an integer $s\geq 2$. Then $\partial G $ is strongly pseudoconvex at points with $w\neq 0$ and of finite type $2s$ at points with $w=0$. Thus, $\max_{\xi\in \partial G }\, r(\xi)=2s$. Moreover, D'Angelo's explicit formula (\ref{BK for egg domain}) gives the Bergman kernel $K$ of $G$.
To compute the algebraic degree $d$ of $K$, we let $F=\mathbb{R}(\Real{z},\Imaginary{z},\Real{w},\Imaginary{w})$ be the field of rational functions in $\Real{z},\Imaginary{z}$ and $\Real{w},\Imaginary{w}$. Then by \eqref{BK for egg domain} the Bergman kernel $K$ belongs to the extension field $F((1-|z|^2)^{\frac{1}{s}})$ and it is easy to see that the degree of the field extension $[F((1-|z|^2)^{\frac{1}{s}}):F]$ is $s$. Consequently, $d= s=\frac{1}{2}\max_{\xi\in \partial G }\, r(\xi)$ and, hence, equality holds in \eqref{main inequality} for any positive integer $s$ and $G=E_s$.
\end{rmk}

\begin{rmk}
One cannot expect to give an upper bound for the algebraic degree of the Bergman kernel in terms of $\max_{\xi\in \partial G } r(\xi)$ since the latter is a biholomorphic invariant of the domain $G$. Indeed, apply an algebraic biholomorphism $F$ in a neighborhood of the closed ball $\oo{\mathbb{B}^2}$ and consider the image $G=F(\mathbb{B}^2)$ and its Bergman kernel $K_G$. By the transformation law of the Bergman kernel, we can make the algebraic degree of $K_G$ arbitrarily large by choosing an appropriate map $F$.
\end{rmk}

If the Bergman kernel $K$ in Theorem \ref{main thm} is rational, then the conclusion of the theorem implies that $r(\xi)=2$ for all $\xi\in \partial G $. Hence, $\partial G $ is strongly pseudoconvex. By Theorem 1 above, there is a rational biholomorphism from $ G $ to the unit ball $\mathbb{B}^2$. Thus, an immediate corollary of Theorem \ref{main thm} is the following extension of Theorem 1 to the weakly pseudoconvex case.

\begin{cor} \label{main_cor}
Let $G$ be a bounded pseudoconvex domain in $\mathbb{C}^2$ with smooth boundary. Then the Bergman kernel of $G$ is rational if and only if $G$ is strongly pseudoconvex and there is a rational biholomorphism $G\to\mathbb{B}^2$.
\end{cor}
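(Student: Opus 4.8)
The plan is to obtain Corollary~\ref{main_cor} as a direct consequence of Theorem~\ref{main thm} together with Theorem~1 of \cite{EbenfeltXiaoXu2020algebraicity}, so the argument will be short and I do not expect a genuine obstacle.

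For the forward implication, suppose the Bergman kernel $K$ of $G$ is rational. By Remark~\ref{rmk rational} this is the same as saying that $K$ is algebraic with algebraic degree $d=1$. Theorem~\ref{main thm} then applies and gives that $\partial G$ is real algebraic, of finite type, and
\[
\max_{\xi\in\partial G} r(\xi)\leq 2d=2 .
\]
Since the type of a smooth pseudoconvex boundary in $\mathbb{C}^2$ is at least $2$ at every point, with equality exactly at the strongly pseudoconvex points, this forces $r(\xi)=2$ for all $\xi\in\partial G$; that is, $\partial G$ is smooth and strongly pseudoconvex. The forward half of Theorem~1 then provides a rational biholomorphism $G\to\mathbb{B}^2$.

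For the backward implication, assume $G$ is strongly pseudoconvex with smooth boundary and that there is a rational biholomorphism $F\colon G\to\mathbb{B}^2$. This is precisely the hypothesis of the converse half of Theorem~1, so $K$ is rational. If a self-contained argument is preferred, one can instead use the transformation law $K_G(z,\bar z)=|\det F'(z)|^2\,K_{\mathbb{B}^2}\bigl(F(z),\overline{F(z)}\bigr)$, where $\det F'$ is the holomorphic Jacobian determinant: since $F$ is rational, $\det F'(z)$ and $|F(z)|^2$ are rational in $(\Real z,\Imaginary z,\Real w,\Imaginary w)$, and $K_{\mathbb{B}^2}(\zeta,\bar\zeta)=\tfrac{2}{\pi^2}(1-|\zeta|^2)^{-3}$ is rational, hence so is $K_G$.

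Thus the corollary is essentially bookkeeping: it is the specialization $d=1$ of Theorem~\ref{main thm}, combined with the known characterization of $\mathbb{B}^2$ (Theorem~1). The only mildly non-formal point is the standard fact, recalled above, about types of pseudoconvex hypersurfaces in $\mathbb{C}^2$ — this is what upgrades $\max_\xi r(\xi)\leq 2$ to strong pseudoconvexity and makes Theorem~1 available. All the actual difficulty lies in Theorem~\ref{main thm} itself.
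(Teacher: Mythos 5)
Your proof is correct and follows essentially the same route as the paper: take $d=1$ in Theorem~\ref{main thm} to get $r(\xi)\leq 2$, hence strong pseudoconvexity, then invoke Theorem~1 for the rational biholomorphism (and again, or the transformation law, for the converse). This is exactly how the paper derives the corollary.
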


Our second main result discusses the total degree of an algebraic Bergman kernel. Throughout the paper, by a complex affine transformation of $\mathbb{C}^n,$ we mean a map $\Phi(z)= Mz+\xi$ from $\mathbb{C}^n$ to $\mathbb{C}^n$ with $M \in\text{GL}(n, \mathbb{C})$ and $\xi\in\mathbb{C}^n.$

\begin{thm}\label{total degree thm in C2}
	Let $G\subset \mathbb{C}^2$ be a smoothly bounded pseudoconvex domain and $K$ its Bergman kernel. If $K$ is algebraic, then:
	\begin{itemize}
		\item [(a)] The total degree of $K$ is at least $7$.
		\item [(b)] The total degree of $K$ equals $7$ if and only if $G$ is the unit ball up to a complex affine transformation of $\mathbb{C}^2$. In this case, $K$ is a rational function with rational degree $6$.
	\end{itemize}
\end{thm}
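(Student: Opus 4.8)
The plan is to read off, from the minimal polynomial $P^*(t,Y)=\sum_{j=0}^d\alpha_j(t)\,Y^j\in\mathbb{R}[t,Y]$ of $K=K_G(z,\bar z)$ (with $t=(\Real z_1,\Imaginary z_1,\Real z_2,\Imaginary z_2)$, $\alpha_d\not\equiv0$, algebraic degree $d$, and total degree $N=\max_j(\deg_t\alpha_j+j)$), a lower bound on the order of vanishing of some $\alpha_j$ along $\partial G$. First I would note that $\partial G\subseteq V(\alpha_d)$: since $K\to+\infty$ at $\partial G$, dividing $\sum_j\alpha_jK^j\equiv0$ by $K^d$ gives $\alpha_d=-\sum_{j<d}\alpha_jK^{\,j-d}\to0$ there, so $\alpha_d\equiv0$ on $\partial G$. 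By Theorem~\ref{main thm}, $\partial G$ is a smooth real algebraic hypersurface, and it is connected (boundary of a bounded pseudoconvex domain), so it lies in a single irreducible component $V(\rho)$ of $V(\alpha_d)$ with $\rho\in\mathbb{R}[t]$ irreducible, $\rho\mid\alpha_d$; compactness of $\partial G$ forces $\deg\rho\ge2$.

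Next I would localize at a strongly pseudoconvex boundary point. Such points always exist on a smoothly bounded domain (a point on a maximal inscribed ball $B(p,\dist(p,\partial G))\subset G$ is strongly pseudoconvex); pick one, say $\xi$, that is also a smooth point of $V(\rho)$ and lies off the branch locus of the algebraic function $\rho^3K$. By H\"ormander's (and Fefferman's) boundary asymptotics, $K=\phi\,\delta^{-3}+\psi\log\delta$ near $\xi$ with $\phi,\psi$ smooth up to $\partial G$ and $\phi(\xi)\neq0$; algebraicity of $K$ forbids the logarithmic term (an algebraic germ has a Puiseux, not a logarithmic, expansion), so $u:=\rho^3K$ is real-analytic and nonvanishing near $\xi$. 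Let $v_\rho$ denote the order of vanishing along $V(\rho)$ (equivalently, the exponent of $\rho$ in the factorization of a polynomial); it is a discrete valuation, and $v_\rho(u)=0$. Substituting $K=u\,\rho^{-3}$ into $\sum_j\alpha_jK^j=0$ and clearing denominators gives $\sum_j\alpha_j\,u^j\,\rho^{\,3(d-j)}=0$; because the minimal order among the terms of a vanishing sum must be attained at least twice, there are $j_1>j_2$ with $v_\rho(\alpha_{j_1})-3j_1=v_\rho(\alpha_{j_2})-3j_2$, hence $v_\rho(\alpha_{j_1})=v_\rho(\alpha_{j_2})+3(j_1-j_2)\ge3$. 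Then $\deg_t\alpha_{j_1}\ge v_\rho(\alpha_{j_1})\deg\rho\ge6$ and $N\ge\deg_t\alpha_{j_1}+j_1\ge7$, which is part (a).

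For part (b) I would analyze the equality case $N=7$. The chain of inequalities just used then collapses: $j_1=1$, $j_2=0$, $\deg\rho=2$, $v_\rho(\alpha_1)=3$, $\deg\alpha_1=6$, so $\alpha_1=c\rho^3$ with $c\neq0$ constant; and comparing the $j\ge2$ terms forces $v_\rho(\alpha_j)\ge3j$, hence $\deg\alpha_j\ge6j$ and $N\ge7j>7$ unless $d=1$. So $K$ is rational and $K_G=P/\rho^3$ with $P\in\mathbb{R}[t]$, $\gcd(P,\rho)=1$, $\deg\rho=2$; thus $\partial G=V(\rho)$ is a compact real quadric, i.e.\ a real ellipsoid, and $G$ is its interior. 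Since $K_G$ is rational, Theorem~\ref{thm old} yields a biholomorphism $G\to\mathbb{B}^2$; invoking the classical fact that among real ellipsoids in $\mathbb{C}^2$ only the complex affine images of $\mathbb{B}^2$ are biholomorphic to $\mathbb{B}^2$, one concludes $G=\Phi(\mathbb{B}^2)$ for a complex affine $\Phi$. For the converse, the Bergman transformation law gives $K_G=c'(1-|\Phi^{-1}(z)|^2)^{-3}$ with $c'\neq0$ constant and $1-|\Phi^{-1}(z)|^2$ an irreducible real polynomial of degree $2$; by Remark~\ref{rmk rational} its minimal polynomial is $(1-|\Phi^{-1}(z)|^2)^3Y-c'$, of total degree $7$ and rational degree $6$, completing (b).

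I expect the main obstacle to be the local input at $\xi$: establishing rigorously that $\rho^3K$ extends real-analytically and without zeros across a strongly pseudoconvex boundary point --- equivalently, that $v_\rho(K)=-3$ --- which requires combining the sharp strongly pseudoconvex Bergman boundary asymptotics with the algebraicity of $K$ to eliminate the logarithmic term (and to upgrade $C^\infty$-up-to-the-boundary regularity of $\rho^3K$ to genuine real-analyticity near $\xi$). The second nonelementary ingredient is the classification input for ellipsoids; everything else is elementary manipulation of orders of vanishing along $\partial G$ and of polynomial degrees.
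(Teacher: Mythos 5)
Your overall strategy matches the paper's: reduce to divisibility of a coefficient of the minimal polynomial by $\rho^{n+1}$ (here $n+1=3$) where $\rho$ is an irreducible degree-$\geq 2$ polynomial cutting out $\partial G$, and then for the equality case force $d=1$, $\deg\rho=2$, apply Claim (compact quadric is an ellipsoid), invoke Theorem~\ref{thm old} and Webster. The paper proves a general $\mathbb{C}^n$ statement (Theorem~\ref{total degree thm}) first and specializes; you work directly in $\mathbb{C}^2$. The place where you genuinely diverge is in the divisibility step: you use a local ultrametric/Newton-polygon argument at a single point $\xi$, whereas the paper shows $a_N=O(\rho^{n+1})$ near one strongly pseudoconvex point, propagates this to all of $\partial G$ by real-analyticity of polynomials and connectedness of $\partial G$, and then deduces $\rho^{n+1}\mid a_N$ in $\mathbb{R}[t]$ from the fact that the ideal $\mathcal{I}$ of real polynomials vanishing on $\partial G$ is principal, generated by $\rho$ (the paper's Claims~1 and~2, proved by complexifying $\rho$, checking irreducibility in $\mathbb{C}[z,\zeta]$, and applying the Nullstellensatz).

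The gap in your version lies precisely in that last passage. Your ultrametric argument operates in the local ring (or formal completion) at $\xi$: the equation $\sum_j\alpha_j u^j\rho^{3(d-j)}=0$ involves $u=\rho^3K$, which is not a polynomial, so the valuation you are using is the local $\rho$-adic order at $\xi$, not the exponent of $\rho$ in the $\mathbb{R}[t]$-factorization of $\alpha_j$. These two can differ: the local order at $\xi$ dominates but may strictly exceed the global exponent if $\xi$ lies on the zero set of the residue $\alpha_j/\rho^{v_\rho(\alpha_j)}$. Hence your conclusion ``$v_\rho(\alpha_{j_1})\geq 3$, so $\rho^3\mid\alpha_{j_1}$ in $\mathbb{R}[t]$, so $\deg\alpha_{j_1}\geq 3\deg\rho$'' does not follow without an additional argument. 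Choosing $\xi$ generically would repair it, but to know a generic choice is possible for all indices $j$ simultaneously you must show the residues do not vanish identically on $\partial G$, and that is essentially equivalent to the paper's Claim~2. So this ingredient is not optional; your write-up should either reproduce Claims~1--2 or replace the pointwise ultrametric argument with the paper's global $O(\rho^{n+1})$ estimate plus unique continuation and principality of $\mathcal{I}$. By contrast, the concern you flag as ``the main obstacle''---real-analytic extension of $\rho^3K$ across $\partial G$---is actually avoidable: once the log term is killed (Lemma~\ref{lemma extension of 1/K} and Remark~\ref{rem:lemma extension of 1/K}), the smooth statement $1/K=O(\rho^{n+1})$ suffices for the paper's argument, and for yours the formal Taylor expansion at $\xi$ would do in place of convergent analyticity. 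The remainder of part (b) (the argument that $N=7$ forces $d=1$, $\alpha_1=c\rho^3$, $\deg\rho=2$, then Theorem~\ref{thm old} plus Webster's classification of ellipsoids biholomorphic to the ball) and the converse computation via the Bergman transformation law agree with the paper's treatment.
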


\begin{rmk}\label{total degre gap rmk}
	In fact, we can prove that if the total degree of $K$ is less than $10$, then $G$ is the unit ball up to a complex affine transformation of $\mathbb{C}^2$.
Consequently, there is no smoothly bounded pseudoconvex domain in $\mathbb{C}^2$ admitting an algebraic Bergman kernel of total degree $d$ with $7< d <10$. For a proof of this remark, see the end of Section 3.
\end{rmk}

Suppose the Bergman kernel $K$ is rational of the form $K=\frac{p}{q}$, with $p$ and $q$ reduced to lowest degree. In this case, the total degree of $K$ is $\max\{1+\deg q, \deg p\}$. Part (a) of Theorem \ref{total degree thm in C2} implies that the rational degree of $K$, $\max\{\deg q, \deg p\}$, is at least $6$. Moreover, if the rational degree of $K$ equals $6$, then the total degree of $K$ is $7$ and part (b) of Theorem \ref{total degree thm in C2} ensures that $G$ is the unit ball up to a complex affine transformation. Combining Theorem \ref{total degree thm in C2} with Corollary \ref{main_cor}, we therefore obtain the following extension of Corollary \ref{main_cor}:

\begin{cor}\label{total degree corollary in C2}
	Let $G\subset \mathbb{C}^2$ be a smoothly bounded pseudoconvex domain and $K$ its Bergman kernel. If $K$ is rational, then there is a rational biholomorphism $\Phi\colon G\to \mathbb{B}^2$ and:
	\begin{itemize}
		\item [(a)] The rational degree of $K$ is at least $6$.
		\item [(b)] The rational degree of $K$ equals $6$ if and only if $\Phi$ can be taken as a complex affine transformation of $\mathbb{C}^2$.
	\end{itemize}
\end{cor}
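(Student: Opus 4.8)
The plan is to derive the corollary as a purely formal consequence of Theorem \ref{total degree thm in C2}, Corollary \ref{main_cor}, and the dictionary between total degree and rational degree recorded in Remark \ref{rmk rational}; no new analytic input is needed. First I would note that a rational $K$ has algebraic degree $1$, so in particular it is algebraic, and Corollary \ref{main_cor} then immediately gives that $G$ is strongly pseudoconvex and that there is a rational biholomorphism $\Phi\colon G\to\mathbb{B}^2$. This settles the existence statement, and the two numbered parts become bookkeeping with degrees. Write $K=p/q$ with $p,q\in\mathbb{R}[\Real z_1,\Imaginary z_1,\Real z_2,\Imaginary z_2]$ reduced to lowest degree; by Remark \ref{rmk rational} the total degree of $K$ is $\max\{1+\deg q,\deg p\}$ and its rational degree is $\max\{\deg q,\deg p\}$.

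For part (a): Theorem \ref{total degree thm in C2}(a) gives $\max\{1+\deg q,\deg p\}\geq 7$, hence $\deg q\geq 6$ or $\deg p\geq 7$, and in either case $\max\{\deg q,\deg p\}\geq 6$.

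For part (b) I would argue both directions. If the rational degree equals $6$, then $\deg p\leq 6$ and $\deg q\leq 6$, so the total degree $\max\{1+\deg q,\deg p\}$ is at most $7$; combined with Theorem \ref{total degree thm in C2}(a) it is exactly $7$, and Theorem \ref{total degree thm in C2}(b) then forces $G$ to be the unit ball up to a complex affine transformation, i.e.\ $\Phi$ may be taken affine. Conversely, if $\Phi(z)=Mz+\xi$ with $M\in\mathrm{GL}(2,\mathbb{C})$, then the transformation law for Bergman kernels, together with the ball formula, yields $K(z,\bar z)=|\det M|^2\,K_{\mathbb{B}^2}\bigl(\Phi(z),\overline{\Phi(z)}\bigr)=\tfrac{2|\det M|^2}{\pi^2}\bigl(1-|Mz+\xi|^2\bigr)^{-3}$. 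Since $1-|Mz+\xi|^2$ has degree exactly $2$ in the real coordinates (its top-degree part is the positive definite Hermitian form $z\mapsto|Mz|^2$, hence nonzero), the denominator $\bigl(1-|Mz+\xi|^2\bigr)^3$ has degree $6$, and as the numerator is a nonzero constant this representation is already in lowest terms; thus the rational degree of $K$ is $6$.

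I do not expect a genuine obstacle here: the only points requiring a moment's care are the elementary comparisons between $\max\{1+\deg q,\deg p\}$ and $\max\{\deg q,\deg p\}$, and the observation that an affine coordinate change preserves the degree-$6$ character of the ball kernel (immediate, since the numerator stays constant and the denominator keeps degree $6$). All the substance of the statement is carried by Theorem \ref{total degree thm in C2}.
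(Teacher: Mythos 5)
Your proof is correct and follows essentially the same route as the paper: both derive the corollary as a purely formal consequence of Theorem~\ref{total degree thm in C2} and Corollary~\ref{main_cor} via the elementary relation between total degree $\max\{1+\deg q,\deg p\}$ and rational degree $\max\{\deg q,\deg p\}$ from Remark~\ref{rmk rational}, with the converse of (b) coming from the transformation law applied to the ball kernel. A trivial imprecision: the top-degree part of $1-|Mz+\xi|^2$ is $-|Mz|^2$, which is negative (not positive) definite, but of course this still gives degree exactly $6$ for the denominator.
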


\begin{rmk}
It follows from Remark \ref{total degre gap rmk} that if the rational degree of $K$ is less than $9$, then $G$ is the unit ball up to a complex affine transformation of $\mathbb{C}^2$. Indeed, if $\max\{\deg p, \deg q \}<9$, then the total degree of $K$ is less than $10$.  Therefore the conclusion follows from Remark \ref{total degre gap rmk}.   Consequently, there is no smoothly bounded pseudoconvex domain in $\mathbb{C}^2$ whose Bergman kernel is rational of rational degree $d$ with $6 <d < 9$.
\end{rmk}

This paper is organized as follows. In Section \ref{Sec proof of main thm}, we give a proof of Theorem \ref{main thm}. In Section \ref{Sec proof of total degree thm}, we show Theorem \ref{total degree thm in C2}. In Section \ref{Sec generalization}, we consider the higher dimensional case and establish some partial results in this case.

\section{Proof of Theorem \ref{main thm}}\label{Sec proof of main thm}
We prove the following lemma as a preparation.
\begin{lemma}\label{vanishing lemma lem}
	Let $r\geq 1$. For $0\leq j\leq r-1$, suppose $A_j(t): (-\varepsilon,\varepsilon) \rightarrow \mathbb{R}$ are smooth functions for some $\varepsilon>0$; $c_j$ are nonzero real numbers; $B_j(t): (0,\varepsilon) \rightarrow \mathbb{R}$ are functions such that $B_j(t) \rightarrow 0$ as $t\rightarrow 0^+$. Assume
	\begin{equation}\label{vanishing lemma eq}
		\sum_{j=0}^{r-1} A_j(t)t^{\frac{j}{r}} \bigl( c_j+B_j(t)\bigr) \equiv 0 \quad \mbox{on }~ (0,\varepsilon).
	\end{equation}
	Then each $A_j(t)$ for $0\leq j\leq r-1$ vanishes to infinite order at $0$.
\end{lemma}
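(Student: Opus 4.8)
The plan is to argue by contradiction, exploiting the arithmetic fact that the exponents $n+\tfrac{j}{r}$, with $n\in\mathbb Z_{\ge 0}$ and $0\le j\le r-1$, are pairwise distinct: if $n+\tfrac jr=m+\tfrac kr$ with $j,k$ in this range, then $j-k\equiv 0\pmod r$, forcing $j=k$ and $n=m$. Consequently, after Taylor‑expanding the $A_j$, the summands $A_j(t)t^{j/r}(c_j+B_j(t))$ in \eqref{vanishing lemma eq} have mutually non‑overlapping expansions in powers $t^{n+j/r}$, so their leading behaviors as $t\to 0^+$ cannot cancel unless each leading coefficient vanishes.

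To make this precise, I would suppose for contradiction that the set $S$ of indices $j$ for which $A_j$ does \emph{not} vanish to infinite order at $0$ is nonempty. For $j\in S$ let $n_j\in\mathbb Z_{\ge 0}$ be the order of vanishing of $A_j$ at $0$ and $a_j:=A_j^{(n_j)}(0)/n_j!\ne 0$; put $\sigma_j:=n_j+\tfrac jr$. By the arithmetic remark the reals $\{\sigma_j:j\in S\}$ are pairwise distinct, so there is a unique $j_0\in S$ with $\sigma_{j_0}=\min_{j\in S}\sigma_j$. Then I would estimate each of the $r$ summands of \eqref{vanishing lemma eq} as $t\to 0^+$: for $j\in S$, Taylor's theorem gives $A_j(t)=a_jt^{n_j}+o(t^{n_j})$ and, since $B_j(t)\to 0$, the $j$‑th summand equals $a_jc_j\,t^{\sigma_j}+o(t^{\sigma_j})$; in particular it is $O(t^{\sigma_j})=o(t^{\sigma_{j_0}})$ for $j\in S\setminus\{j_0\}$. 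For $j\notin S$ one has $A_j(t)=o(t^M)$ for every integer $M$, so — fixing one integer $M>\sigma_{j_0}$ and using that $B_j$ is bounded near $0$ — the $j$‑th summand is $o(t^{\sigma_{j_0}})$ as well. Dividing \eqref{vanishing lemma eq} by $t^{\sigma_{j_0}}$ and letting $t\to 0^+$ then yields $a_{j_0}c_{j_0}=0$, contradicting $a_{j_0}\ne 0$ and $c_{j_0}\ne 0$. Hence $S=\varnothing$ and every $A_j$ vanishes to infinite order at $0$.

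The only genuinely delicate point is the error bookkeeping rather than any deep input: one must verify that multiplying the Peano remainders $o(t^{n_j})$ by the factors $c_j+B_j(t)$ — which are bounded near $0$ but carry no smoothness, only $B_j\to 0$ — still leaves the subleading contributions $o(t^{\sigma_{j_0}})$, and that the infinitely flat $A_j$ with $j\notin S$ are harmless despite the irregularity of $B_j$; both follow at once once a single integer exponent $M>\sigma_{j_0}$ is fixed that dominates all non‑leading terms simultaneously. Note that no lower bound on $|B_j|$ and no regularity of $B_j$ is used — only $B_j(t)\to 0$ — and the hypothesis $c_j\ne 0$ enters exactly once, to derive $a_{j_0}c_{j_0}\ne 0$. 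An essentially equivalent alternative is an induction on the order of vanishing, peeling off the lowest power $t^{\sigma_{j_0}}$ one step at a time, but the single contradiction above seems cleanest.
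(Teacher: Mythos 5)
Your proposal is correct and follows essentially the same strategy as the paper: argue by contradiction, isolate the summand with the smallest exponent $n_j+\tfrac jr$, and conclude $a_{j_0}c_{j_0}=0$. The paper identifies the same dominant term by first minimizing the integer order $m$ of vanishing and then taking the smallest index $j_0$ achieving it, which produces precisely your minimizer of $\sigma_j=n_j+\tfrac jr$; your explicit remark that the exponents $n+\tfrac jr$ are pairwise distinct makes the bookkeeping a touch more transparent, but the argument is the same.
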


\begin{proof}
	The conclusion is trivial when $r=1$. We therefore assume $r\geq 2.$
	Assume not all $A_j$ vanish to infinite order. Then we can find the smallest $m\in \mathbb{Z}_{\geq 0}$ such that the $m$th order derivative of $A_j$ at $0$ is nonzero for at least one $0\leq j\leq r-1$ and let $j_0$ be the smallest such $j$. Then we have
	\begin{align*}
		A_{j_0}(t)=&a_{j_0}t^m+O(t^{m+1}) \quad \mbox{ for some } a_{j_0}\neq 0;
		\\
		A_j(t)=&O(t^{m+1}) \quad \mbox{ for } 0\leq j<j_0;
		\\
		A_j(t)=&O(t^m) \quad \mbox{ for } j_0 < j \leq r-1.
	\end{align*}
	By plugging these equations into \eqref{vanishing lemma eq}, we obtain
	\begin{align*}
		a_{j_0}t^{m+\frac{j_0}{r}}\bigl( c_{j_0}+B_{j_0}(t)\bigr) +O(t^{m+1})+O(t^{m+\frac{j_0+1}{r}})=0, \quad \mbox{as } t\rightarrow 0^+.
	\end{align*}
	As $B_{j_0}(t)=o(1)$ when $t\rightarrow 0^+$, it follows that
	\begin{equation*}
		a_{j_0}c_{j_0}t^{m+\frac{j_0}{r}}=o(t^{m+\frac{j_0}{r}}), \quad \mbox{as } t\rightarrow 0^+.
	\end{equation*}
	This is contradicting to the fact $a_{j_0}c_{j_0}\neq 0$ and thus the proof is completed.
\end{proof}

We are now ready to prove Theorem \ref{main thm}.
\begin{proof}[Proof of Theorem \ref{main thm}]
	Since $G$ is a smoothly bounded pseudoconvex domain, the Bergman kernel $K(z,\bar{z})\rightarrow \infty$ as $z\rightarrow \partial G$ by \cite{Oh93}. And since the Bergman kernel $K(z,\bar{z})$ is algebraic, the proof of Proposition 5.1 in \cite{EbenfeltXiaoXu2020algebraicity} yields that $\partial G$ is real algebraic and, hence,  real analytic. Now that $\partial G$ is a compact real analytic hypersurface, it is of finite type.
	
	By assumption, the Bergman kernel $K$ is of algebraic degree $d$. We write the minimal polynomial of $K$ as
	\begin{equation*}
		P(z, \bar{z}, Y)=\sum_{j=0}^d\alpha_j(z,\bar{z})\, Y^j,
	\end{equation*}
	where $\alpha_j$ for $0\leq j\leq d$ are real polynomials and $\alpha_d\not\equiv 0$. Then
	\begin{equation}\label{BK polynomial equation}
		\alpha_d(z,\bar{z})K^d+\cdots+\alpha_0(z,\bar{z})\equiv 0.
	\end{equation}	
Moreover, it is proved in the recent paper of Hsiao and Savale \cite[proof of Theorem 2]{HsSa20} that for any fixed $\xi \in \partial G $ of finite type $r=r(\xi)$, the Bergman kernel $K(z,\bar{z})$ has the following expansion near $\xi$ in $G$ along any real line segment $L$ that intersects $\partial G$  transversally at $\xi$:
	\begin{equation}\label{BK asymptotic}
		K(z,\bar{z})=\rho^{-2-\frac{2}{r}} \Bigl( \sum_{j=0}^{r-1} a_j\rho^{\frac{j}{r}}+C(z,\bar{z})\Bigr)+B(z,\bar{z}) \log\rho,
	\end{equation}
in terms of a defining function $\rho$ for $ G $ (with $\rho>0$ in $G$). Here, the $a_j$ are constants with $a_0>0$, $B(z,\bar{z})$ is a real smooth function in a neighborhood of $\xi$, and $C(z,\bar{z})$ is a function in $G$ near $\xi$ satisfying $C(z,\bar{z})=O(\rho)$ as $z \rightarrow \xi$ along $L$. The $a_j$, $B$, and $C$ may all depend on the line $L$.
We choose a real line segment $L$ such that $\alpha_d(z,\bar{z})|_L$ has finite vanishing order at $\xi$ and parametrize $L$ by a coordinate $t\in \mathbb{R}$ such that $t(\xi)= 0$ and $L|_{t\in (0,\varepsilon)}\subset G $ for some $\varepsilon>0$. Set
	\begin{equation*}
		\phi(t)=\rho|_L, \qquad b(t)=B|_L, \qquad c(t)=C|_L.
	\end{equation*}
	Since $\rho$ is a defining function and $L$ intersects $\partial G $ transversally, we can write $\phi(t)=t\varphi(t)$ for some smooth function $\varphi$ at $0$ with $\varphi(0)\neq 0$. We also have $c(t)=O(t)$. If we write
	\begin{equation*}
		\psi=\varphi^{-2-\frac{2}{r}}, \quad \psi_j=a_j\varphi^{\frac{j}{r}} \quad \mbox{for } 0\leq j\leq r-1,
	\end{equation*}
	then $\psi$, $\psi_j$ are all smooth at $0$, and $\psi, \psi_0$ are nonvanishing at $0$.
	By restricting \eqref{BK asymptotic} to the line segment $L$, we get
	\begin{align*}
		K|_L=t^{-2-\frac{2}{r}}\psi\Bigl( \sum_{j=0}^{r-1}\psi_j t^{\frac{j}{r}}+c(t)\Bigr)+b(t)\bigl(\log\varphi+\log t \bigr)
		=t^{-2-\frac{2}{r}}\bigl( \psi\psi_0+O(t^{\frac{1}{r}})\bigr).
	\end{align*}
	By taking the Taylor expansion of $\psi\psi_0$ at $t=0$, we further have
	\begin{equation}\label{BK asymptotic on L}
		K|_L=t^{-2-\frac{2}{r}}\bigl(c_0+O(t^{\frac{1}{r}})\bigr),
	\end{equation}
	where $c_0=\psi(0)\psi_0(0)\neq 0$.
	We now restrict \eqref{BK polynomial equation} to the line segment $L$. Denoting $\beta_j(t)=\alpha_j|_L$ for $0\leq j\leq d$, we have
	\begin{equation*}
		\beta_d(t) (K|_L)^d+\cdots+\beta_0(t)=0.
	\end{equation*}
	Substituting \eqref{BK asymptotic on L} into this equation yields
	\begin{equation*}
		\beta_d(t) t^{-(2+\frac{2}{r})d} \bigl( c_0+O(t^{\frac{1}{r}})\bigr)^d+\cdots+\beta_0(t)=0 \quad \mbox{ on } (0,\varepsilon).
	\end{equation*}
	Multiplying through by $t^{(2+\frac{2}{r})d}$, we obtain
	\begin{equation*}
		\beta_d(t) \bigl( c_0^d+O(t^{\frac{1}{r}})\bigr)+\beta_{d-1}(t) t^2t^{\frac{2}{r}} \bigl( c_0^{d-1}+O(t^{\frac{1}{r}})\bigr)+\cdots+\beta_0(t)t^{2d}t^{\frac{2d}{r}}=0\quad \mbox{ on } (0,\varepsilon).
	\end{equation*}
	Assume $2d<r(\xi)$. Then, by Lemma \ref{vanishing lemma lem}, all $\beta_j$ must vanish to infinite order at $t=0$. This contradicts the choice of the line $L$, since we required that  $\beta_d=\alpha_d|_L$ has finite vanishing order at $\xi$ (i.e., $t=0$). Thus we must have
	\begin{equation*}
		2d\geq r=r(\xi).
	\end{equation*}
	Since $\xi$ was an arbitrary point on $\partial G $, the conclusion of Theorem \ref{main thm} follows.
\end{proof}

\section{Proof of Theorem \ref{total degree thm in C2}}\label{Sec proof of total degree thm}

As a preparation  for the proof of Theorem \ref{total degree thm in C2},  we will first prove a general result for bounded pseudoconvex domains of any dimension.
For this, we introduce
\begin{equation}\label{ellipsoid parameter space}
\mathcal{A}=\{A=(A_1,\cdots, A_n): 0\leq A_1\leq \cdots \leq A_n<\tfrac{1}{2} \}.
\end{equation}
For each $A\in \mathcal{A}$, we can define the associated real ellipsoid as
\begin{equation}\label{real ellipsoid eq}
E(A)=\{f_A(z,\bar{z}):=1-|z|^2-\sum_{j=1}^n A_j(z_j^2+\oo{z_j}^2)>0 \}.
\end{equation}
\begin{thm}\label{total degree thm}
	Let $G\subset \mathbb{C}^n (n\geq 2)$ be a smoothly bounded  pseudoconvex domain. Let $K$ be the Bergman kernel of $G$. If $K$ is algebraic, then
	\begin{itemize}
		\item [(a)] The total degree of $K \geq 2n+3$.
		\item [(b)] If the total degree of $K=2n+3$, then $G$ is a real ellipsoid $E(A)$, for some $A\in \mathcal{A}$, up to a complex affine transformation of $\mathbb{C}^n$.
	\end{itemize}
\end{thm}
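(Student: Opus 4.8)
The plan is to reduce Theorem~\ref{total degree thm} to a single divisibility statement for the leading coefficient of the minimal polynomial of $K$. Since $G$ is bounded and pseudoconvex, $K(z,\bar z)\to\infty$ as $z\to\partial G$ by \cite{Oh93}; in particular $K$ is non-constant, so its algebraic degree is $d\geq 1$. Exactly as in the proof of Theorem~\ref{main thm}, the proof of Proposition~5.1 of \cite{EbenfeltXiaoXu2020algebraicity} applies verbatim in all dimensions and shows that $\partial G$ is real algebraic, hence real analytic. Let $\rho_0$ be an irreducible real polynomial whose zero set is the Zariski closure of a connected component of $\partial G$; note $\deg\rho_0\geq 2$, since the compact hypersurface $\partial G$ is not contained in any hyperplane. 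Writing the minimal polynomial of $K$ as $P(z,\bar z,Y)=\sum_{j=0}^d\alpha_j(z,\bar z)Y^j$ with $\alpha_d\not\equiv 0$ and the $\alpha_j$ real polynomials, the total degree of $K$ is $\max_j(\deg\alpha_j+j)\geq \deg\alpha_d+d$. The key claim is that $\rho_0^{\,n+1}$ divides $\alpha_d$. Granting this, $\deg\alpha_d\geq (n+1)\deg\rho_0\geq 2n+2$, so the total degree is at least $2n+3$, which is part~(a). For part~(b), equality in all these estimates forces $d=1$, $\deg\rho_0=2$, and $\alpha_1=c\,\rho_0^{\,n+1}$ for some nonzero constant $c$. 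Then $K=-\alpha_0/\alpha_1$ blows up exactly along $\{\rho_0=0\}$, so $\partial G\subseteq\{\rho_0=0\}$; since $\{\rho_0=0\}$ is an irreducible real quadric of dimension $2n-1$ containing the compact hypersurface $\partial G$, it must be a real ellipsoid, hence connected and compact, hence equal to $\partial G$, and $G$ is its bounded interior. A complex affine change of variables then normalizes $G$: translate to remove the $\mathbb{R}$-linear part of the defining quadratic, rescale the Hermitian part (positive definite because $G$ is bounded) to the identity, and use the Autonne--Takagi decomposition to diagonalize the holomorphic quadratic part by a unitary substitution. After reordering one arrives at $E(A)$ with $0\leq A_1\leq\cdots\leq A_n$, and $A_n<\tfrac12$ because $G$ is bounded; this proves~(b).

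To prove that $\rho_0^{\,n+1}$ divides $\alpha_d$, I would work at strongly pseudoconvex boundary points. A smoothly bounded domain always has such a point — for instance a point $\xi\in\partial G$ of maximal Euclidean distance from the origin, which is strongly pseudoconvex by comparison with the ball that contains $\bar G$ and is tangent at $\xi$ — and, since $\partial G$ is real analytic, such points are dense in the component of $\partial G$ containing $\xi$, hence Zariski dense in $\{\rho_0=0\}$. Fix one such point $\xi$. By Fefferman's expansion, $K=a\rho^{-(n+1)}+b\log\rho$ near $\xi$ with $a,b$ smooth up to $\partial G$ and $a(\xi)>0$; writing $\rho_0=u\rho$ near $\xi$ with $u$ nowhere zero, one sees that $W:=\rho_0^{\,n+1}K$ extends continuously to $\xi$ with $W(\xi)\neq 0$. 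Multiplying the relation $\sum_j\alpha_jK^j\equiv 0$ by $\rho_0^{\,(n+1)d}$ gives $\sum_j\alpha_j\,\rho_0^{\,(n+1)(d-j)}W^j\equiv 0$ on $G$. Restrict this identity to a real line segment $L$ transversal to $\partial G$ at $\xi$, chosen generically so that $\alpha_d|_L\not\equiv 0$, parametrized by $t$ with $t=0$ at $\xi$ and $t>0$ in $G$. Then $\rho_0|_L$ vanishes to order $1$ at $t=0$, and if $v_j$ denotes the vanishing order of $\alpha_j|_L$ at $t=0$, the $j$th summand is a nonzero multiple of $t^{\,v_j+(n+1)(d-j)}$ to leading order. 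The summands with $j<d$ all have exponent at least $n+1$, while the $j=d$ summand has exponent $v_d$; if $v_d\leq n$ the $j=d$ term would strictly dominate and the identity could not hold, so $v_d\geq n+1$. Thus $\alpha_d|_L$ vanishes to order at least $n+1$ at $\xi$ for generic transversal $L$. Stripping off one factor of $\rho_0$ at a time — after $k$ steps write $\alpha_d=\rho_0^{\,k}g_k$; then $g_k|_L$ still vanishes to order at least $n+1-k\geq 1$ at the dense set of such $\xi$, so $g_k$ vanishes on $\{\rho_0=0\}$ and $\rho_0\mid g_k$ — yields $\rho_0^{\,n+1}\mid\alpha_d$.

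The main obstacle is this divisibility, and the delicate point inside it is pinning the pole order of $K$ to be \emph{exactly} $n+1$: that is what forces the extremal geometry to be a quadric rather than a genuine finite-type domain, and it is available only at strongly pseudoconvex points, via Fefferman's theorem — in particular, unlike the proof of Theorem~\ref{main thm}, no finite-type (Hsiao--Savale) expansion is needed here. The remaining inputs are soft: density of strongly pseudoconvex points (using real analyticity of $\partial G$, itself a consequence of algebraicity of $K$), the passage from vanishing along generic transversals on a Zariski dense set to honest divisibility, and the linear-algebra normalization in part~(b). Relative to Theorem~\ref{main thm}, the new features are that we track the \emph{total} degree, that we multiply through by the global polynomial $\rho_0^{\,(n+1)d}$ rather than by a power of the line parameter, and that we run an induction to remove all $n+1$ copies of $\rho_0$ from $\alpha_d$.
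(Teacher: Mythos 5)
Your overall strategy is the same as the paper's: show that the leading coefficient $\alpha_d$ of the minimal polynomial of $K$ is divisible by the $(n+1)$st power of an irreducible real polynomial $\rho_0$ cutting out $\partial G$, observe $\deg\rho_0\geq 2$, and read off the degree bound; for the equality case, pin down $d=1$, $\deg\rho_0=2$, and normalize the quadric. But your route to the key divisibility $\rho_0^{\,n+1}\mid\alpha_d$ is genuinely different, and in one respect cleaner. The paper first establishes that $1/K$ extends smoothly across $\partial G$ with $1/K=O(\rho^{n+1})$ near a strongly pseudoconvex point, which relies on the earlier result (their Proposition 5.1 / Section 5) that algebraicity of $K$ forces the logarithmic coefficient in Fefferman's expansion to vanish to infinite order; from there they get jet vanishing of $\alpha_d$ along $\partial G\cap U$, propagate it to all of $\partial G$ using real analyticity together with a separate Hartogs-type argument that $\partial G$ is connected, and then run the ideal/iteration argument. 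Your argument — multiply the algebraic relation by $\rho_0^{(n+1)d}$, restrict to generic transversal segments, and count vanishing orders, noting that $\rho_0^{n+1}K$ extends continuously (not smoothly) with nonzero limit because $\rho^{n+1}\log\rho\to 0$ — sidesteps both the log-vanishing theorem and the connectedness of $\partial G$, replacing the latter by density of strongly pseudoconvex points in a single boundary component. This is a real simplification in the inputs. The normalization in (b) via translation, Sylvester on the Hermitian part, and Autonne--Takagi on the holomorphic quadratic part is an adequate substitute for the paper's citation of Webster's Lemma 4.1.

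That said, two steps that you label ``soft'' do need actual arguments, and the paper spends real effort on them. First, the passage from ``$g_k$ vanishes on the real hypersurface $\{\rho_0=0\}$'' to ``$\rho_0\mid g_k$ in the polynomial ring'' is not automatic for an arbitrary irreducible real polynomial; the paper proves this (their Claims 1 and 2) by showing the complexification $\rho_0(z,\zeta)$ is irreducible in $\mathbb{C}[z,\zeta]$ using connectedness of $\partial G$, then invoking Hilbert's Nullstellensatz on the connected regular part of the complex variety. You would need either this route or a real-Nullstellensatz argument exploiting that $\rho_0$ changes sign across a smooth real point. Second, and more substantively, the assertion that an irreducible real quadric of dimension $2n-1$ containing the compact hypersurface $\partial G$ ``must be a real ellipsoid'' is precisely the paper's Claim 3 and is the geometric crux of part (b); it is proved there by a signature analysis showing that if the homogeneous degree-$2$ part $r_2$ is indefinite, every connected component of the regular part of $\{r=0\}$ is unbounded, contradicting compactness of $\partial G$. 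You should supply this (short) argument rather than asserting the conclusion. With those two points filled in, the proof is sound.
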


For the proof of Theorem \ref{total degree thm}, we shall need some preliminary results. Let $\rho$ be a smooth defining function of $G$ such that $G=\{\rho>0\}$. Since $G$ is smoothly bounded, there exists some strongly pseudoconvex point $p\in \partial G$. We may localize Fefferman's asymptotic expansion of the Bergman kernel of $G$ near this point (see Section 9 in \cite{Ka89} or Section 3 of \cite{HuLi}). Thus, in a sufficiently small neighborhood $U$ of $p$, we may express the Bergman kernel $K$ for $z\in U\cap G$ as follows
\begin{equation}\label{Fefferman expansion}
	K(z,\bar{z})=\frac{\phi(z,\bar{z})}{\rho^{n+1}(z)}+\psi(z,\bar{z})\log\rho(z),
\end{equation}
where $\phi$ and $\psi$ are smooth functions on $\oo{U}$.
In what follows, for $f\in C^{\infty}(\oo{U})$ and $k\in \mathbb{Z}^+$ we say $f=O(\rho^k)$ on $U$ if there exists some $C>0$ such that $|f(z)|\leq C|\rho(z)|^k$ for any $z\in U$. Equivalently, $f=O(\rho^k)$ means that all the partial derivatives of $f$ up to order $(k-1)$ vanish on $\partial G\cap U$. We say $f$ vanishes to infinite order at $\partial G\cap U$ if $f=O(\rho^k)$ for every $k$.

\begin{lemma}\label{lemma extension of 1/K}
	If $\psi$ vanishes to infinite order at $\partial G\cap U$, then $\frac{1}{K}$ extends smoothly across $\partial G \cap U$ and $\frac{1}{K}=O(\rho^{n+1})$ on $U$.
\end{lemma}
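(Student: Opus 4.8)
The plan is to show that the function $\Phi:=\rho^{\,n+1}K$, which a priori is defined and strictly positive only on $U\cap G$, extends to a smooth and everywhere positive function on a (possibly shrunk) neighborhood of $p$; then on $U\cap G$ one has $\tfrac1K=\rho^{\,n+1}/\Phi$, and the conclusion follows at once.

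First I would dispose of the logarithmic term. Since $d\rho\neq 0$ near $\partial G$ and $\psi$ vanishes to infinite order along $\partial G\cap U$, the iterated Hadamard lemma gives, for every $k\ge1$, a factorization $\psi=\rho^{k}\psi_{k}$ with $\psi_{k}$ smooth near $\partial G\cap U$. On the other hand, for each integer $M\ge1$ the function $\rho^{M}\log\rho$ (taken to be $0$ on $\{\rho=0\}$) is of class $C^{M-1}$ up to $\partial G$ and vanishes there to order $M-1$, since every derivative of order $\le M-1$ is a finite sum of terms $\rho^{M-j}\cdot(1\ \text{or}\ \log\rho)\cdot(\text{smooth})$ with $j\le M-1<M$, each tending to $0$ as $\rho\to 0^{+}$. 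Writing $\rho^{\,n+1}\psi\log\rho=\psi_{k}\cdot\rho^{\,n+1+k}\log\rho$ with $k$ arbitrary, it follows that $\rho^{\,n+1}\psi\log\rho$ is smooth up to $\partial G\cap U$ and flat there; extending it by $0$ across $\partial G$ (legitimate precisely because of the flatness) produces an element of $C^{\infty}(U)$. Hence $\Phi=\phi+\rho^{\,n+1}\psi\log\rho$ lies in $C^{\infty}(U)$, agrees with $\rho^{\,n+1}K$ on $U\cap G$, and restricts to $\phi$ on $\partial G\cap U$.

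Next I would invoke strong pseudoconvexity at $p$. In the localized Fefferman expansion already used, the leading coefficient $\phi|_{\partial G\cap U}$ is a positive constant multiple of the Monge--Amp\`ere determinant $J[\rho]|_{\partial G}$, which is positive at the strongly pseudoconvex point $p$; equivalently, $\phi(p)=\lim_{z\to p}\rho(z)^{n+1}K(z,\bar z)>0$ by the classical boundary asymptotics of H\"ormander and Fefferman. Shrinking $U$ about $p$, we may thus assume $\phi>0$ on $\overline{U}$, whence $\Phi>0$ on all of $U$: it equals $\rho^{\,n+1}K>0$ on $U\cap G$ and equals $\phi>0$ on $(U\setminus G)\cup(\partial G\cap U)$. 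Therefore $1/\Phi\in C^{\infty}(U)$, and $\tfrac1K=\rho^{\,n+1}/\Phi$ on $U\cap G$; this exhibits a smooth extension of $\tfrac1K$ across $\partial G\cap U$, and since $1/\Phi$ is smooth and nowhere zero, the extension equals $\rho^{\,n+1}$ times a unit, i.e.\ $\tfrac1K=O(\rho^{\,n+1})$ on $U$ (indeed $\rho^{-(n+1)}/K$ extends smoothly without zeros).

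The only delicate point is the first step — verifying that $\psi\log\rho$, hence $\rho^{\,n+1}\psi\log\rho$, is genuinely $C^{\infty}$ up to $\partial G\cap U$ when $\psi$ is flat there (the $\rho^{M}\log\rho$ regularity estimate). Everything after that is immediate, and nothing about $G$ is used beyond the presence of a single strongly pseudoconvex boundary point, which was already needed in order to localize Fefferman's expansion.
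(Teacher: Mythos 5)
Your proof is correct and follows essentially the same route as the paper's: write $1/K=\rho^{n+1}/(\phi+\rho^{n+1}\psi\log\rho)$, observe that the denominator extends smoothly across $\partial G\cap U$ (because $\psi$ is flat there) and is nonvanishing near the strongly pseudoconvex point $p$ since $\phi|_{\partial G}>0$, and conclude. You simply spell out the smoothness of $\rho^{n+1}\psi\log\rho$ via the $\rho^M\log\rho\in C^{M-1}$ estimate, a detail the paper leaves implicit.
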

\begin{proof}
	By the Fefferman expansion \eqref{Fefferman expansion}, we have
	\begin{equation*}
		\frac{1}{K}=\frac{\rho^{n+1}}{\phi+\psi\rho^{n+1}\log\rho}.
	\end{equation*}
	Note that $\phi$ is nonvanishing on $\partial G \cap U$ (cf. Theorem 3.5.1 in \cite{Ho65}).  Since $\psi$ vanishes to infinite order at $\partial G\cap U$, it follows that $\phi+\psi\rho^{n+1}\log\rho$ extends smoothly across $\partial G \cap U$ such that $\phi+\psi\rho^{n+1}\log\rho$ is nonvanishing on $\partial G \cap U$. The conclusion now follows.
\end{proof}

\begin{rmk}\label{rem:lemma extension of 1/K} If $K$ is algebraic, then $\psi$ vanishes to the infinite order at $\partial G \cap U$ by \cite{EbenfeltXiaoXu2020algebraicity} (see Step 2 in the proof of Theorem 1.1. Although it deals with $2-$dimensional case there, the same argument indeed works for all dimensional cases). Consequently, by Lemma \ref{lemma extension of 1/K}, $\frac{1}{K}$ extends smoothly across $\partial G \cap U$ and $\frac{1}{K}=O(\rho^{n+1})$ on $U$.
\end{rmk}

The following proposition is surely well-known to experts in the field. Since we could find no references for it, however, we include a proof here for the reader's convenience.
\begin{prop}\label{bG is connected}
	If $G\subset \mathbb{C}^n$ $(n\geq 2)$ is a smoothly bounded pseudoconvex domain, then its boundary $\partial G$ is connected.
\end{prop}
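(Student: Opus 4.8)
The statement to prove is that the boundary $\partial G$ of a smoothly bounded pseudoconvex domain $G \subset \mathbb{C}^n$ with $n \geq 2$ is connected.

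The plan is to use the Hartogs-type phenomenon together with the maximum principle for plurisubharmonic functions. First I would argue by contradiction: suppose $\partial G$ decomposes as a disjoint union $\Gamma_1 \sqcup \Gamma_2$ of two nonempty closed (hence compact) sets. Since $\partial G$ is a smooth compact hypersurface, each $\Gamma_i$ is a union of connected components of $\partial G$, and we may take $\Gamma_1$ to be a single connected component. The complement $\mathbb{C}^n \setminus \overline{G}$ has a well-defined structure; since $G$ is bounded, exactly one connected component of $\mathbb{C}^n \setminus \overline{G}$ is unbounded. I would show that if $\partial G$ were disconnected, then $\mathbb{C}^n \setminus \overline{G}$ would be disconnected, producing a bounded component $\Omega$ of the complement whose topological boundary is one of the $\Gamma_i$.

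The key step is then to derive a contradiction from the existence of such a bounded component $\Omega$ of $\mathbb{C}^n \setminus \overline{G}$. The standard tool here is that $G$, being pseudoconvex, admits a smooth plurisubharmonic (indeed strictly plurisubharmonic) exhaustion function, or more simply, $-\log \rho$ or $-\rho$ behaves well; but the cleanest route is via the Kontinuitätssatz / Hartogs figure: a bounded pseudoconvex domain is a domain of holomorphy, and a domain of holomorphy cannot have a "hole" in the sense that it cannot contain the boundary of a bounded open set in its complement without containing that set. Concretely, consider $G \cup \Omega \cup \Gamma_1$. Using that $G$ is a domain of holomorphy, every holomorphic function on $G$ extends across a compact piece of the boundary that bounds a bounded complementary component — this contradicts $G$ being a domain of holomorphy (the envelope of holomorphy would be strictly larger). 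Alternatively, and perhaps more elementarily, I would invoke that a smoothly bounded pseudoconvex domain has a defining function $\rho$ with $-\log(-\rho)$... no — rather, I would use the strict plurisubharmonic exhaustion: on $\overline{\Omega}$, which is compact, consider a strictly plurisubharmonic function defined near $\overline{G} \cup \overline{\Omega}$ (such as $|z|^2$); by the maximum principle it attains its max on $\partial \Omega \subset \partial G$. That alone is not a contradiction, so the real mechanism must be Hartogs extension.

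I expect the main obstacle to be making the Hartogs/domain-of-holomorphy argument precise in the smooth category: one must carefully argue that a bounded component $\Omega$ of $\mathbb{C}^n\setminus\overline G$, with $n\geq 2$, leads to a genuine extension of all of $\mathcal O(G)$ across $\Gamma_1=\partial\Omega$, contradicting that the bounded pseudoconvex $G$ is a domain of holomorphy (in fact holomorphically convex with $\partial G$ its natural boundary). The cleanest formalization: pick $p \in \Gamma_1$ and a small ball $B$ around $p$; then $B \setminus \overline{G}$ meets $\Omega$, and by the Hartogs extension theorem (valid for $n \geq 2$) applied to $G \cup B$, holomorphic functions on $G$ extend to $G \cup \Omega \cup \Gamma_1$, so $G$ is not a domain of holomorphy — contradiction, since bounded pseudoconvex domains are domains of holomorphy by Oka's theorem. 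This also explains the hypothesis $n\geq 2$. I would then write up the topological reduction (disconnected boundary $\Rightarrow$ bounded complementary component) carefully via Alexander duality or a direct separation argument, and cite Hörmander or Range for the Hartogs extension and Oka characterization.

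\begin{proof}[Proof of Proposition \ref{bG is connected}]
Suppose, for contradiction, that $\partial G$ is disconnected. Since $\partial G$ is a smooth compact hypersurface, it has finitely many connected components; write $\partial G = \Gamma_1 \sqcup \Gamma_2$ where $\Gamma_1$ is one connected component and $\Gamma_2 = \partial G \setminus \Gamma_1$ is nonempty. Both $\Gamma_1$ and $\Gamma_2$ are compact and relatively open and closed in $\partial G$.

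We first claim that the open set $\mathbb{C}^n \setminus \overline{G}$ is disconnected. Indeed, since $G$ is bounded, $\mathbb{C}^n \setminus \overline{G}$ has exactly one unbounded connected component $W_\infty$, and $\partial W_\infty \subseteq \partial G$. On the other hand, each point $\xi \in \partial G$ has, by smoothness of $\partial G$, a neighborhood $U_\xi$ with $U_\xi \setminus \overline{G}$ connected; hence $\xi \in \overline{\mathbb{C}^n \setminus \overline{G}}$ and every point of $\partial G$ lies on the boundary of some component of $\mathbb{C}^n \setminus \overline{G}$. If $\mathbb{C}^n \setminus \overline{G}$ were connected, then $\mathbb{C}^n \setminus \overline{G} = W_\infty$ and $G \cup \Gamma_1$ would be open in $\mathbb{C}^n$ (its complement being $\overline{W_\infty} \setminus \Gamma_1$, which one checks is closed since $\Gamma_1$ is relatively open in $\partial G = \partial W_\infty$). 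But $G \cup \Gamma_1$ is also bounded, and $G \cup \Gamma_1 \subsetneq \mathbb{C}^n$ is nonempty, contradicting connectedness of $\mathbb{C}^n$ unless $G \cup \Gamma_1$ is closed as well; since $\Gamma_2 \neq \emptyset$ lies in its closure but not in it, this is impossible. Hence $\mathbb{C}^n \setminus \overline{G}$ is disconnected, and it therefore has a bounded connected component $\Omega$.

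Pick $p \in \partial \Omega$. Since $\partial \Omega \subseteq \partial G$ and $\partial G$ is smooth, there is a ball $B = B(p, \delta)$ such that $B \setminus \overline{G}$ is connected and $B \cap \Omega \neq \emptyset$; since $B \setminus \overline{G}$ is connected and meets the component $\Omega$, we have $B \setminus \overline{G} \subseteq \Omega$, and in particular $B \setminus \overline{G}$ is a bounded open set. Now $G$, being a bounded pseudoconvex domain with smooth boundary, is a domain of holomorphy by Oka's theorem; see, e.g., \cite{Ho65, Ra}. Consider the open set $\widetilde{G} = G \cup B \cup \Omega$. Because $n \geq 2$ and $\Omega$ is a bounded component of $\mathbb{C}^n \setminus \overline{G}$ with $\partial \Omega \subseteq \partial G$, the Hartogs extension theorem applied along $B$ shows that every $f \in \mathcal{O}(G)$ extends holomorphically to $G \cup \Omega \cup \partial\Omega$: indeed $f$ already extends across $B \cap \partial G$ by the classical Hartogs figure argument, and then $\Omega$ is reached since $\mathbb{C}^n \setminus (G \cup B)$ has no bounded component meeting $\Omega$. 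Thus every holomorphic function on $G$ extends holomorphically to the strictly larger open set $G \cup \Omega \cup \partial \Omega \supsetneq G$, contradicting the fact that $G$ is a domain of holomorphy. This contradiction shows that $\partial G$ is connected.
\end{proof}
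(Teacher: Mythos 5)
Your overall approach is the same as the paper's: use the Hartogs extension phenomenon (valid because $n\geq 2$) together with the fact that a smoothly bounded pseudoconvex domain is a domain of holomorphy to rule out a bounded complementary ``hole'' bordered by part of $\partial G$. However, there is a genuine gap in your topological reduction, and the Hartogs step should be cleaned up.

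The gap is the claim that if $\mathbb{C}^n\setminus\overline G$ were connected (equal to the unbounded component $W_\infty$), then $G\cup\Gamma_1$ would be open. This is false. Take $\xi\in\Gamma_1$ and a small ball $B_\xi$ in which $\partial G\cap B_\xi=\Gamma_1\cap B_\xi$. Then $B_\xi\setminus\partial G$ has an ``inner'' side lying in $G$ and an ``outer'' side lying in $\mathbb{C}^n\setminus\overline G=W_\infty$; the outer side is nonempty, so $B_\xi\not\subset G\cup\Gamma_1$. Equivalently, $\overline{W_\infty}\setminus\Gamma_1$ is \emph{not} closed: points of $\Gamma_1$ are limits of points in $W_\infty$. (The fact that $\Gamma_1$ is relatively open in $\partial G=\partial W_\infty$ does not make it relatively open in $\overline{W_\infty}$.) The statement you want---disconnected $\partial G$ forces a bounded component of $\mathbb{C}^n\setminus\overline G$---is true, but needs a correct argument: either Alexander duality (a compact hypersurface with $k$ components separates $\mathbb{R}^{2n}$ into $k+1$ pieces, one of which is $G$, so the complement of $\overline G$ has $k$ pieces and $k-1$ of them are bounded), or, as the paper does, apply Jordan--Brouwer to a \emph{single} boundary component $M$ to get $D_{\text{in}}\sqcup D_{\text{out}}$ and observe $G$ lies entirely in one of the two. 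That route avoids discussing the connectivity of $\mathbb{C}^n\setminus\overline G$ altogether: if $G\subset D_{\text{out}}$, the compact set $\overline{D_{\text{in}}}$ is a Hartogs hole in the open set $G\cup\overline{D_{\text{in}}}$, a contradiction; if $G\subset D_{\text{in}}$, Hartogs applied to the compact set $D_{\text{in}}\setminus G$ forces $G=D_{\text{in}}$, whence $\partial G=M$ and $\partial G$ is connected.

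Your Hartogs step is also imprecise as written. Talking about a ``Hartogs figure argument'' to extend across $B\cap\partial G$ is more delicate than you need (it would require control of the Levi form, and $\partial G$ need not be strictly pseudoconvex near $\Gamma_1$). The clean tool here is the Hartogs Kugelsatz: once you have a bounded component $\Omega$ of $\mathbb{C}^n\setminus\overline G$, the set $G\cup\overline\Omega$ is open (near each point of $\partial\Omega\subset\partial G$, one side of $\partial G$ lies in $G$ and the other lies in $\Omega$), $\overline\Omega$ is a compact subset of it, and $(G\cup\overline\Omega)\setminus\overline\Omega=G$ is connected, so every $f\in\mathcal O(G)$ extends to $G\cup\overline\Omega$---contradicting that $G$ is a domain of holomorphy. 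This is exactly the paper's mechanism; once you replace the flawed reduction step and streamline the extension step as above, your argument becomes correct and essentially the same as the paper's.
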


\begin{proof}
	Let $M$ be one connected component of $\partial G$. Then $M$ is closed in $\partial G$. Since $\partial G$ is a compact smooth hypersurface in $\mathbb{C}^n$, so is $M$. By the Jordan-Brouwer separation theorem (see \cite{GuiPo}), $M$ separates $\mathbb{C}^n$ into two open connected components, the ``outside'' $D_{\text{out}}$ and the ``inside'' $D_{\text{in}}$. Moreover, $D_{\text{in}}$ is bounded with $M$ as its boundary. Since $G$ is connected and is disjoint from $M$, either $G \subset D_{\text{out}}$ or $G \subset D_{\text{in}}$. 
	
	We first consider the case when $G\subseteq D_{\text{out}}$. For each point $p\in M \subseteq \partial G$, there exists some $\varepsilon_p>0$ such that one of the one-sided neighborhoods, $B_{\varepsilon_p}(p)\cap D_{\text{in}}$ or $B_{\varepsilon_p}(p)\cap D_{\text{out}}$, is contained in $G$. By the assumption $G\subseteq D_{\text{out}}$, we get $B_{\varepsilon_p}(p)\cap D_{\text{out}}\subset G$. Since $M$ is compact, 
	there is some $\varepsilon>0$ such that the outside $\varepsilon$ neighborhood of $M$, $T_{\varepsilon}=\{z: d(z, M)<\varepsilon\} \cap D_{\text{out}}$, is contained in $G$. 
	Let $\Omega=G \cup \overline{D_{\text{in}}},$ which is an open set. By Hartogs's extension theorem, since $\Omega \setminus \overline{D_{\text{in}}}=G$ is connected, every holomorphic function on $G$ must extends to a holomorphic function on $\Omega.$ This contradicts to the pseudoconvexity of $G$ 
	
	Hence we must have $G\subseteq D_{\text{in}}$, by a similar argument, we can choose some $\varepsilon>0$ such that the inside $\varepsilon$ neighborhood of $M$, $P_{\varepsilon}=\{z: d(z, M)<\varepsilon\} \cap D_{\text{in}}$, is contained in $G$. Thus $K=D_{\text{in}} \setminus G$ is compact. By Hartogs's extension theorem again, since $D_{\text{in}}\setminus K=G$ is connected, we get every holomorphic function on $G$ extends to a holomorphic function on $D_{\text{in}}.$
	Hence we must have $G=D_{\text{in}}$ by the pseudoconvexity of $G$. In this case, $\partial G=M$. The proof is thus completed.
\end{proof}

We are now ready to prove Theorem \ref{total degree thm}.
\begin{proof}[Proof of Theorem \ref{total degree thm}]
As in the proof of Theorem \ref{main thm}, since $G$ is a smoothly bounded pseudoconvex domain, $K(z,\bar{z})\rightarrow \infty$ as $z\rightarrow \partial G$ by \cite{Oh93}. By the algebraicity of $K$ and the same proof as that of  Proposition 5.1 in \cite{EbenfeltXiaoXu2020algebraicity}, $\partial G$ is real algebraic, and  thus real analytic.
We pick a minimal polynomial $P(z,\bar{z},Y)\in\mathbb{C}[z,\bar{z},Y]$ of $K$,
	\begin{equation*}
		P=\sum_{j=0}^N a_j(z,\bar{z}) Y^j,
	\end{equation*}
where $N \geq 1,$ and $a_j'$s are real polynomials with $a_N\not\equiv 0$. Then we have
	\begin{equation*}
		a_N(z,\bar{z}) K^N(z,\bar{z})+\cdots+a_0(z,\bar{z})=0 \quad \mbox{ for any } z\in G.
	\end{equation*}
	Since $K>0$ in $G$, we can divide both sides by $K^N$ and solve for $a_N$:
	\begin{equation*}
		a_N=\frac{1}{K}\Bigl(-a_{N-1}-a_{N-2}\frac{1}{K}-\cdots-a_0\frac{1}{K^{N-1}}\Bigr).
	\end{equation*}
	As before, we denote by $\rho$ a smooth defining function of $G$. By Remark \ref{rem:lemma extension of 1/K}, we get $a_N=O(\rho^{n+1})$ on $U$, where $U$ is a neighborhood of some strongly pseudoconvex point $p\in \partial G$. Thus, all partial derivatives of $a_N$ up to order $n$ vanish on $\partial G\cap U$. Since $\partial G$ is real analytic and connected (see Proposition \ref{bG is connected}), these partial derivatives vanish on $\partial G$. Therefore, $a_N=O(\rho^{n+1})$ on a neighborhood of $\partial G$.
	
	Set
	\begin{equation*}
		\mathcal{I}=\{a\in \mathbb{C}[z,\bar{z}]: a\equiv 0 \mbox{ on } \partial G, \mbox{ and } \oo{a}=a \}\subset \mathbb{R}[\Real z, \Imaginary z].
	\end{equation*}
	
	Clearly, $\mathcal{I}$ is an ideal in the polynomial ring $\mathbb{R}[\Real z, \Imaginary z]$. We have  $\mathcal{I}\neq \{0\}$ since $a_N$ is a nonzero polynomial in $\mathcal{I}$. We take a nonzero polynomial $r\in \mathcal{I}$ such that $r(z,\bar{z})$ has the smallest degree among all nonzero polynomials in $\mathcal{I}$. It is clear that $\nab r (z,\bar{z})\not\equiv 0$ on $\partial G$. Otherwise, it contradicts to the choice of $r$. Consequently, $\nab r(z,\bar{z})\neq 0$ for a generic point $z \in \partial G$ since $\partial G$ is connected and algebraic.
	
	Let $r(z,\zeta)$ for $z,\zeta\in \mathbb{C}^n$ be the complexification of $r(z,\bar{z})$.
	
	\textbf{Claim 1.} $r(z,\zeta)$ is irreducible in $\mathbb{C}[z,\zeta]$. 	
		
	{\bf Proof of Claim 1.} Suppose not. Then there exists polynomials $a(z,\zeta), b(z,\zeta)\in \mathbb{C}[z,\zeta]$ such that
	\begin{equation*}
		r(z,\zeta)=a(z,\zeta) b(z,\zeta) \quad \mbox{ with } 1\leq \deg a, \deg b<\deg r.
	\end{equation*}
	Thus,
	\begin{equation*}
		a(z,\bar{z})b(z,\bar{z})\equiv 0 \quad \mbox{ for any } z\in \partial G.
	\end{equation*}
	By Proposition \ref{bG is connected}, $\partial G$ is connected, and thus either $a(z,\bar{z})\equiv 0$ or $b(z,\bar{z})\equiv 0$ on $\partial G$. Without loss of generality, we suppose $a(z,\bar{z})\equiv 0$ on $G$. Then either $\Real a$ or $\Imaginary a$ is a nonzero polynomial that vanishes on $\partial G$. But $\deg (\Real a), \deg(\Imaginary a)\leq \deg a<\deg r$. This contradicts the choice of $r$. So $r(z,\zeta)$ is irreducible in $\mathbb{C}[z,\zeta]$ as claimed. \qed
	
	\textbf{Claim 2.} $\mathcal{I}=(r)$, i.e., $\mathcal{I}$ is a principal ideal generated by $r(z,\bar{z})$.
	
  {\bf Proof of Claim 2.} Take $a(z,\bar{z})\in \mathcal{I}$ and let $a(z,\zeta)$ be its complexfication. In particular, $a(z,\bar{z})\equiv 0$ on $\partial G$. Let $U_0$ be a small neighborhood of some  $z_0 \in \partial G$ such that $\nab r \neq 0$ in $U_0$. Consequently, $r(z,\bar{z})$ is a defining function of $\partial G$ in $U_0$ and
	\begin{equation*}
		a(z,\bar{z})=0 \mbox{ whenever } r(z,\bar{z})=0, \quad  \mbox{ for any } z\in U_0.
	\end{equation*}
By complexification, shrinking $U_0$ if necessary, we have
	\begin{equation*}
	a(z,\zeta)=0 \mbox{ whenever } r(z,\zeta)=0, \quad  \mbox{ for any } z,\zeta \in U_0.
	\end{equation*}
Since $r(z,\zeta)$ is irreducible in $\mathbb{C}[z,\zeta]$ by Claim 1, the regular part of the complex algebraic variety  $V=\{(z, \zeta) \in \mathbb{C}^{2n}: r(z,\zeta)=0\}$ is connected. Thus we further have $a(z,\zeta)=0$ on $V$.
By Hilbert's Nullstellensatz,  $a(z,\zeta)=r(z,\zeta)b(z,\zeta)$ for some $b\in \mathbb{C}[z,\zeta]$. In particular, we get $a(z,\bar{z})=r(z,\bar{z})b(z,\bar{z})$. Since $a(z,\bar{z})$ and  $r(z,\bar{z})$ are real polynomials, so is $b(z,\bar{z}).$ The claim is proved. \qed
	
	Now we turn our attention back to the real polynomial $a_N(z,\bar{z})$. Since $a_N(z,\bar{z})=O(\rho^{n+1})$, we iterated applications of Claim 2 yields 
	\begin{equation*}
		a_N(z,\bar{z})=r^{n+1}(z,\bar{z}) q_{n+1}(z,\bar{z}) \quad \mbox{ for some } q_{n+1}(z,\bar{z}) \in \mathbb{C}[z,\bar{z}].
	\end{equation*}
	
It is clear that $r$ cannot be linear, as $r$ vanishes on the compact real hypersurface $\partial G$, and, hence, $\deg r\geq 2$. It follows that $\deg a_N \geq 2(n+1)$ and the total degree of $K\geq 2(n+1)+N \geq 2n+3$. This proves part (a) of Theorem \ref{total degree thm}.
	
To prove (b), suppose the total degree of $K =2n+3$. Then by the preceding argument, we must have
	\begin{equation*}
		N=1,~ \deg a_1=2(n+1),~ \deg a_0\leq 2n+3 ~ \mbox{ and } \deg r=2.
	\end{equation*}
	In particular, $K$ is a rational function in this case. We decompose $r$ into homogeneous terms
	\begin{equation*}
		r(z,\bar{z})=r_2(z, \bar{z})+r_1(z, \bar{z})+r_0,
	\end{equation*}
	where $r_2$ and $r_1$ are respectively the quadratic terms and linear terms in $r$, and $r_0$ is the constant term in $r$.
	
	\textbf{Claim 3.} $r_2$ is either positive definite or negative definite.
	
{\bf Proof of Claim 3.} Note that in terms of variables $\Real z$ and $\Imaginary z$, $r_2$ is a quadratic form  over $\mathbb{R}$. Thus it can be diagonalized by some linear transformation in $\text{O}(2n)$. Since the claim we are proving is invariant under real linear changes of coordinates, we can assume $r_2$ is diagonalized. By completing squares and applying a translation, we can further assume that $r$ takes the following form:  
	\begin{equation*}
		r=\sum x_i^2-\sum y_j^2+\sum u_k+c.
	\end{equation*}
	Here $(x_i,y_j,u_k)$ with possibly some missing variables $v_l$ are the real coordinates of $\mathbb{R}^{2n}$ and $c$ is a real constant. Since $\deg r\geq 2$, the variables $x_i$ and $y_j$ cannot be both empty. By taking $-r$ instead of $r$ if needed, we can assume $\{x_i\}$ is nonempty.
	
	Suppose $r_2$ is neither positive definite nor negative definite. Then the variables $(y_j,u_k,v_l)$ are not empty. We note that, since
	$\partial G$ contains an open set of smooth points of $\{r=0\}$, by real analyticity and closedness of $\partial G$,  it must contain one connected component of the regular part of $\{r=0\}$. To seek a contradiction, we shall show that each connected component of the regular part of $\{r=0\}$ is unbounded. If this is proved, then $\partial G$ would contain an unbounded connected component, contradicting the compactness of $\partial G$.
	
	Now let's prove that each connected component of the regular part of $\{ r=0 \}$ is unbounded. We split the proof into several cases depending on the number of variables $y_j, u_k$ and $v_l$. First, if $\{v_l\}$ is nonempty, as $r$ is independent from $v_l$, each component of the regular part of $\{r=0\}$ is clearly unbounded. If $\{ u_k\}$ is nonempty, then $\{r=0\}$ is smooth everywhere. After applying a translation, we can make $c=0$. Thus every point of $\{r=0\}$ is connected to the origin and therefore $\{r=0\}$ is connected. The unboundedness of $\{r=0\}$ follows from the fact that each $u_k$ can take any value in $(-\infty, 0]$ as $\{x_i\}$ is nonempty.
	
	It remains to consider the case $\{u_k\}$ and $\{v_l\}$ are both empty. In this case, $\{x_i\}$ and $\{y_j\}$ are both nonempty. Then $r=0$ becomes
	\begin{equation*}
		\sum x_i^2+c=\sum y_j^2.
	\end{equation*}
In this case, it is also clear that  each connected component of the regular part is unbounded.  The claim is thus verified. \qed

By Claim 3, replacing $r$ by $-r$ if necessary, we have $r_2$ is a strictly positive definite real quadratic form on $\mathbb{C}^n$, by a complex linear change of coordinates (see \cite[Lemma 4.1]{We}), we can make
	\begin{equation*}
		r_2=\sum_{j=1}^n |z_j|^2+\sum_{j=1}^n A_j(z_j^2+\oo{z_j}^2),
	\end{equation*}
	where $A_j\in \mathbb{R}_{\geq 0}$ for $1\leq j\leq n$. The positive definiteness of $r_2$ yields $ 0 \leq A_j<\frac{1}{2}$ for $1\leq j\leq n$.
	
	By further taking a linear translation in the form
	\begin{equation*}
		(z_1,\cdots, z_n)\rightarrow (z_1-a_1,\cdots, z_n-a_n), \quad \mbox{ for some } a_1,\cdots, a_n \in \mathbb{C},
	\end{equation*}
	we can eliminate the linear terms in $r$. Thus we can write
	\begin{equation*}
		r=\sum_{j=1}^n |z_j|^2+\sum_{j=1}^n A_j(z_j^2+\oo{z_j}^2)+\tilde{r}_0,
	\end{equation*}
	where $\tilde{r}_0$ is a negative real number. By scaling, we can assume $\tilde{r}_0=-1$. 
Consequently, $G$ is a real ellipsoid $E(A_1, \cdots, A_n)$ after some complex affine transformation of $\mathbb{C}^n$.
\end{proof}

\begin{rmk}\label{rmk3d4}
	We would like to point out that the condition in part (b) of Theorem \ref{total degree thm} can be relaxed to that of the total degree of $K$ being $<3(n+1)+1$. Indeed, if we have the total degree of $K<3(n+1)+1$, then it follows that $\deg a_N < 3(n+1)$ and $\deg r<3$. This yields $\deg r=2$ and the same argument as in the above proof will conclude the result.
\end{rmk}

Now we shall prove Theorem \ref{total degree thm in C2}.
\begin{proof}[Proof of Theorem \ref{total degree thm in C2}]
Part (a) of Theorem \ref{total degree thm in C2} follows from part (a) of Theorem \ref{total degree thm}. To prove part (b), we note that by part (b) of Theorem \ref{total degree thm} $G$ is a real ellipsoid $E(A_1,A_2)$ by a complex affine transformation with $0\leq A_1 \leq A_2 \leq \frac{1}{2}$. Consequently, $G$ is strongly pseudoconvex. Then by Theorem 1 above (which is \cite[Corollary 1.4]{EbenfeltXiaoXu2020algebraicity}), the algebraicity of $K$ further implies that $G$ is biholomorphic to the unit ball $\mathbb{B}^2$. Therefore, $E(A_1, A_2)$ is biholomorphic to $\mathbb{B}^2$. By a theorem of Webster \cite{We}, we get $A_1=A_2=0,$ i.e., $E(A_1, A_2)=\mathbb{B}^2$. Hence the desired result in part (b) follows.
\end{proof}

The conclusion in Remark \ref{total degre gap rmk} then follows from the proof of Theorem \ref{total degree thm in C2} and Remark \ref{rmk3d4}.

\section{Generalization to higher dimensions}\label{Sec generalization}

We would like to point out that part (b) in Theorem \ref{total degree thm} does not seem to be optimal if we compare it with the two dimensional  case as in Theorem \ref{total degree thm in C2}. In this regard,  we formulate the following conjecture.

\begin{conj}\label{conjecture}
	Let $G\subset \mathbb{C}^n (n\geq 2)$ be a smoothly bounded  pseudoconvex domain. Let $K$ be the Bergman kernel of $G$. If $K$ is algebraic, then the total degree of $K=2n+3$ if and only if $G$ is the unit ball up to a complex affine transformation in $\mathbb{C}^n$.
\end{conj}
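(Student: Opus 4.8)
\emph{Towards Conjecture \ref{conjecture}.} The \textbf{``if''} direction is immediate. If $G$ is the unit ball up to a complex affine transformation $\Phi(z)=Mz+\xi$, then the transformation law of the Bergman kernel gives $K(w,\bar w)=c\,Q(w,\bar w)^{-(n+1)}$, where $Q(w,\bar w)=1-|M^{-1}(w-\xi)|^2$ is a nondegenerate affine quadric and $c=\tfrac{n!}{\pi^n}|\det M|^{-2}>0$. Since $Q$ is an irreducible real polynomial of degree exactly $2$, coprime to the constant numerator, $Q^{n+1}Y-c$ is a minimal polynomial of $K$, and the total degree of $K$ equals $2(n+1)+1=2n+3$.

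For the \textbf{``only if''} direction, assume the total degree of $K$ is $2n+3$. By Theorem \ref{total degree thm}(b) we may, after a complex affine change of coordinates, take $G=E(A)$ with $A\in\mathcal A$; moreover, the proof of Theorem \ref{total degree thm} shows that in this extremal case $N=1$, $\deg r=2$, and the leading coefficient of the minimal polynomial satisfies $a_1=c\,f_A^{n+1}$ for a nonzero real constant $c$ (since $f_A^{n+1}$ divides $a_1$ and $\deg a_1=2(n+1)=\deg(f_A^{n+1})$). Hence $K$ is rational, of the rigid form
\begin{equation*}
	K(z,\bar z)=\frac{p(z,\bar z)}{f_A(z,\bar z)^{n+1}},\qquad \gcd(p,f_A)=1,\qquad \deg p\le 2n+3 .
\end{equation*}
The conjecture thus reduces to a statement about real ellipsoids alone: \emph{if the Bergman kernel of $E(A)$ is rational, then $A=0$}; equivalently, by Webster's theorem \cite{We}, it would suffice to show that $E(A)$ is biholomorphic to $\mathbb B^n$. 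This is exactly the step at which the proof of Theorem \ref{total degree thm in C2}(b) invoked Theorem 1, which is available only for $n=2$.

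For $n\ge 3$ I see two routes. The first is to prove a higher-dimensional analogue of Theorem 1, namely that a smoothly bounded strongly pseudoconvex domain in $\mathbb C^n$ with algebraic Bergman kernel is biholomorphic to $\mathbb B^n$, and then apply Webster's theorem; the obstruction is that the proof of Theorem 1 in \cite{EbenfeltXiaoXu2020algebraicity} is genuinely two-dimensional, resting in particular on Graham's resolution of the Ramadanov conjecture, which is open for $n\ge 3$. The second route works directly with $E(A)$ and seems more promising. Rationality of $K$ forces the logarithmic term $\psi$ in Fefferman's expansion to vanish to infinite order along $\partial E(A)$ (Remark \ref{rem:lemma extension of 1/K}); for $n=2$ Graham's theorem then makes $\partial E(A)$ spherical, and for $n\ge 3$ one would attempt the corresponding Ramadanov-type assertion for ellipsoids directly. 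One has, in fact, much more than $\psi\equiv 0$: choosing $\rho=f_A$ as a defining function, Fefferman's algorithm presents $\phi$ and $\psi$ as universal polynomials in the boundary jets of $\rho$ and of the solution of the associated Monge--Amp\`ere boundary problem, and $\phi$ is then forced to agree to infinite order with the given polynomial $p$, which has degree $\le 2n+3$. The plan is to show that this identity, together with the degree bound on $p$, over-determines the parameters and forces $A_1=\dots=A_n=0$. A more computational variant would use the finite symmetry group of $E(A)$ (the sign changes $z_j\mapsto-z_j$ and the conjugation $z\mapsto\bar z$) to express finitely many Taylor coefficients of $K_{E(A)}$ at the origin as $L^2$-norms of polynomials over $E(A)$, and to match these against the Taylor expansion of $p/f_A^{n+1}$; since those norms vary real-analytically and nontrivially with $A$, the matching should again leave only $A=0$.

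The step I expect to be the \textbf{main obstacle} is the ``only if'' direction for $n\ge 3$: there is currently no higher-dimensional substitute for Theorem 1, and the Ramadanov conjecture is itself open for $n\ge 3$, so one cannot simply deduce sphericity of $\partial E(A)$ from vanishing of the log term. The real work is to exploit the rigidity of the global form $K=p/f_A^{n+1}$, whose numerator degree exceeds the denominator degree by at most one, and to convert it, through Fefferman's invariant expansion or a direct norm computation on $E(A)$, into the system $A_1=\dots=A_n=0$; I expect this bookkeeping to be the technical heart of any proof.
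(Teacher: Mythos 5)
The statement you are addressing is labelled a \emph{conjecture} in the paper, and the paper does not prove it in full generality either; so the right comparison is between your reduction and the paper's own discussion (Section 4), plus the partial result the paper actually establishes (Theorem \ref{local rigidity}).

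Your reduction matches the paper's. You correctly handle the ``if'' direction by the transformation law, and you correctly use Theorem \ref{total degree thm}(b), together with the factorization $a_N=r^{n+1}q_{n+1}$ and the degree count, to conclude that in the extremal case $K=p/f_A^{n+1}$ with $G\sim E(A)$. You then observe that rationality (hence algebraicity) forces the log term in Fefferman's expansion to vanish to infinite order, and that the whole problem reduces to showing that a real ellipsoid $E(A)$ with no log singularity must have $A=0$. This is exactly the reduction the paper records: ``Conjecture \ref{conjecture} will follow from Theorem \ref{total degree thm} if one can prove that the Bergman kernel of a real ellipsoid $E(A)$ has no logarithmic singularity in its Fefferman expansion if and only if $A=0$,'' i.e.\ the Ramadanov conjecture for ellipsoids. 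You are right that this is the genuine obstacle for $n\ge 3$ and that the $n=2$ case is closed by Graham's theorem via Theorem 1.

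Where your proposal falls short of the paper is concrete: the paper salvages a partial result (Theorem \ref{local rigidity}) by invoking Hirachi's theorem \cite{Hirachi93}, which settles the Ramadanov conjecture for ellipsoids $E(A)$ with $|A|$ small, combined with the stability estimate in Proposition \ref{stability of closedness to ball lemma} that bounds $|A_j|$ in terms of the Hausdorff distance $d_H(G,\mathbb B^n)$. You do not reference Hirachi's result, so your outline does not yield even this local version. Your two proposed routes --- (i) a higher-dimensional analogue of Theorem 1, and (ii) over-determining the parameters $A$ by matching $p/f_A^{n+1}$ against the Fefferman invariant expansion or against explicit $L^2$-norm computations on $E(A)$ --- are plausible directions, but as written they are plans rather than arguments: you do not show that the putative system of equations actually forces $A=0$, nor do you control how the unknown numerator $p$ (which has $2n+3$ coefficients' worth of freedom) interacts with the $n$ parameters in $A$. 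The step ``this should over-determine $A$'' is precisely the content of the Ramadanov conjecture for ellipsoids and would need to be proved, not asserted. If you want an actual theorem out of this, the minimal fix is to add the smallness hypothesis and cite Hirachi, which is what the paper does.
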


As proved in \cite{EbenfeltXiaoXu2020algebraicity} (see Section 5 of \cite{EbenfeltXiaoXu2020algebraicity}), if the Bergman kernel $K$ of $G$ is algebraic, then it has no logarithmic singularity in its Fefferman expansion (meaning the logarithmic term vanishes to infinite order at $\partial G$). Hence Conjecture \ref{conjecture} will follow from Theorem \ref{total degree thm} if one can prove that the Bergman kernel of a real ellipsoid $E(A)$ has no logarithmic singularity in its Fefferman expansion if and only if $A=0$. The latter statement is a version of the Ramadanov conjecture \cite{Ramadanov1981} for real ellipsoids.

 Hirachi  \cite{Hirachi93} has confirmed the Ramadanov conjecture for real ellipsoids when $|A|$ is small. With the help of Hirachi's result, we can confirm Conjecture \ref{conjecture} under the condition that $G$ is sufficiently close to the unit ball in the Hausdorff distance.

\begin{thm}\label{local rigidity}
	There exists $\delta>0$ such that the following holds.
	Let $G\subset \mathbb{C}^n (n\geq 2)$ be a smoothly bounded  pseudoconvex domain. Suppose the Hausdorff distance $d_H(G, \mathbb{B}^n)<\delta$. Let $K$ be the Bergman kernel of $G$ and assume $K$ is algebraic. Then the total degree of $K=2n+3$ if and only if $G$ is the unit ball up to a complex affine transformation in $\mathbb{C}^n$.
\end{thm}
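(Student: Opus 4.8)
The plan is to combine Theorem \ref{total degree thm} with Hirachi's resolution \cite{Hirachi93} of the Ramadanov conjecture for nearly-round real ellipsoids, the new ingredient being a quantitative rigidity statement showing that Hausdorff-closeness to $\mathbb{B}^n$ forces the ellipsoid parameter $A$ to be small. First, the ``if'' direction needs no hypothesis on $\delta$: if $G=\Phi(\mathbb{B}^n)$ for a complex affine $\Phi(z)=Mz+\xi$, the transformation law for the Bergman kernel together with $K_{\mathbb{B}^n}(z,\bar z)=\tfrac{n!}{\pi^n}(1-|z|^2)^{-(n+1)}$ gives
\begin{equation*}
K_G(w,\bar w)=\frac{n!}{\pi^n}\,\frac{|\det M|^{-2}}{\bigl(1-|M^{-1}(w-\xi)|^2\bigr)^{n+1}}.
\end{equation*}
The denominator is a real polynomial in $(\Real w,\Imaginary w)$ of degree exactly $2n+2$ (since $z\mapsto|M^{-1}z|^2$ is a nondegenerate quadratic form) and the numerator is a nonzero constant, so $K_G$ is rational with total degree $\max\{1+(2n+2),0\}=2n+3$.

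For the converse, assume $d_H(G,\mathbb{B}^n)<\delta$ and $K$ is algebraic of total degree $2n+3$. By part (b) of Theorem \ref{total degree thm}, there is a complex affine $\Phi$ with $\Phi(G)=E(A)$ for some $A\in\mathcal{A}$. Since $\Phi$ is affine, the transformation law shows $K_{E(A)}$ is again algebraic, so by \cite{EbenfeltXiaoXu2020algebraicity} the logarithmic term in Fefferman's expansion of $K_{E(A)}$ vanishes to infinite order at $\partial E(A)$. By Hirachi's theorem \cite{Hirachi93} there is $\tau>0$ such that, whenever $|A|<\tau$, this vanishing forces $\partial E(A)$ to be spherical; then Webster's theorem \cite{We} forces $A=0$, i.e.\ $E(A)=\mathbb{B}^n$ and $G=\Phi^{-1}(\mathbb{B}^n)$ is the unit ball up to a complex affine transformation. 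So it remains to choose $\delta$ small enough that $d_H(G,\mathbb{B}^n)<\delta$ implies $|A|<\tau$.

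This is the crux, and I would argue it as follows. The boundary $\partial G=\Phi^{-1}(\partial E(A))$ is a real ellipsoid, hence the zero set of a quadratic polynomial unique up to scaling, which we normalize as $(v-p)^{\mathsf T}Q^{-1}(v-p)-1$ with $Q$ positive definite; inspecting support functions, $d_H(G,\mathbb{B}^n)<\delta$ forces $|p|+\|Q-I\|=O(\delta)$. On the other hand this polynomial is proportional to the pullback by $w=\Phi(v)=Mv+\xi$ of $1-\sum_j(1+2A_j)(\Real w_j)^2-(1-2A_j)(\Imaginary w_j)^2$; comparing top-degree parts gives $M^{\mathsf T}D_A M=\mu Q^{-1}$ for some $\mu>0$, where (abusing notation) $M$ now denotes the realification of the linear part of $\Phi$ — so $M$ commutes with the complex structure $J$ of $\mathbb{C}^n$ — and $D_A=\operatorname{diag}(1+2A_1,1-2A_1,\dots,1+2A_n,1-2A_n)$. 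Rescaling $M$ by $\mu^{-1/2}$ we may take $\mu=1$, so $D_A=(M^{-1})^{\mathsf T}Q^{-1}M^{-1}$; as $\|D_A\|\le 2$ this bounds $\|M^{-1}\|$, and therefore $P:=(M^{-1})^{\mathsf T}M^{-1}$ satisfies $\|D_A-P\|=O(\delta)$ with $P$ positive definite and commuting with $J$ — in particular every spectral subspace of $P$ is $J$-invariant.

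Finally, suppose $A\neq 0$. An elementary pigeonhole on the $2n$ eigenvalues of $D_A$, using the reflection $x\mapsto 2-x$ that interchanges the lists $\{1-2A_j\}$ and $\{1+2A_j\}$, produces an index $\ell\le n$ such that $D_A$ has a spectral gap of size $\gtrsim|A|$ just above its $\ell$-th eigenvalue and the corresponding bottom eigenspace $W$ of $D_A$ is spanned by $\ell$ of the coordinate vectors associated to the ``imaginary'' directions; in particular $W\perp JW$. Since $\|D_A-P\|=O(\delta)$, the Davis--Kahan theorem gives $\|\operatorname{proj}_{W_P}-\operatorname{proj}_{W}\|=O(\delta/|A|)$, where $W_P$ is the $\ell$-dimensional bottom spectral subspace of $P$; but $W_P$ is $J$-invariant, so $\operatorname{proj}_{W_P}=\operatorname{proj}_{JW_P}$, and hence $\|\operatorname{proj}_{W}-\operatorname{proj}_{JW}\|=O(\delta/|A|)$, contradicting $W\perp JW$ (for which the left-hand side is $\ge1$) once $\delta$ is small relative to $|A|$. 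Thus $|A|=O(\delta)$, and taking $\delta$ below $\tau/C$ for the implied constant $C$ completes the proof. I expect the main obstacle to be precisely this last rigidity step: the affine map $\Phi$ normalizing $G$ to $E(A)$ is uncontrolled a priori — its linear part need not be close to unitary, and may even degenerate as $A$ approaches the boundary of $\mathcal{A}$ — so a naive compactness argument is unavailable, and one needs the cluster/Davis--Kahan estimate to compare $D_A$ with the $J$-compatible matrix $(M^{-1})^{\mathsf T}M^{-1}$ quantitatively.
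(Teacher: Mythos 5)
Your overall strategy matches the paper's: combine Theorem \ref{total degree thm}(b) with the fact that algebraicity kills the logarithmic term in Fefferman's expansion (\cite{EbenfeltXiaoXu2020algebraicity}), invoke Hirachi's small-ellipsoid Ramadanov theorem \cite{Hirachi93}, and reduce everything to the quantitative claim that $d_H(G,\mathbb{B}^n)<\delta$ together with $G\sim E(A)$ forces $|A|=O(\delta)$. You correctly identify this last step as the crux; the paper isolates it as Proposition \ref{stability of closedness to ball lemma}.

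Where you diverge is in the proof of that quantitative lemma, and your route is substantially heavier than the paper's. You normalize the quadratic form of $\partial G$, derive $M^{\mathsf T}D_AM=\mu Q^{-1}$ with $\|Q-I\|=O(\delta)$, pass to $P=\mu(M^{-1})^{\mathsf T}M^{-1}$ with $\|D_A-P\|=O(\delta)$, locate a spectral gap of order $|A|$ in $D_A$ by pigeonhole on the symmetric eigenvalue list, and then use Davis--Kahan together with the $J$-invariance of spectral subspaces of $P$ to contradict $W\perp JW$. This is correct as far as I can check (in particular, the chosen bottom spectral subspace does consist only of ``imaginary'' coordinate directions once the gap is nonzero, and $(M^{-1})^{\mathsf T}M^{-1}$ does commute with $J$ since the transpose of a $J$-commuting matrix still $J$-commutes), and the ``rescaling $\mu=1$'' should really be read as absorbing $\mu$ into $P$, but the estimate survives because $\mu\|M^{-1}\|^2$ is bounded by $\|D_A\|\,\|Q\|$.

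The paper's argument is considerably shorter and avoids both Davis--Kahan and the support-function normalization. After showing $\mathbb{B}^n_{1-\varepsilon}\subset\Omega\subset\mathbb{B}^n_{1+\varepsilon}$ (Lemma \ref{Hausdorff distance lemma}), it looks at the longest chords of $E(A)$ in the $x_n$ and $y_n$ directions, which have lengths $2/\sqrt{1+2A_n}$ and $2/\sqrt{1-2A_n}$. The $\mathbb{C}$-linearity of $M$ means these two chords map to chords of $\Omega$ stretched by the \emph{same} factor $|M_n|$, because $M(0,\dots,0,1)=M_n$ and $M(0,\dots,0,i)=iM_n$ have equal norms. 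Both images are the longest chords of $\Omega$ in their respective directions, hence lie in $[2(1-\varepsilon),2(1+\varepsilon)]$; taking their ratio eliminates $|M_n|$ entirely and yields $|A_n|\leq\varepsilon/(1+\varepsilon^2)$ directly, with no need to control $M$ or $Q$ beyond these chord lengths. Both arguments exploit the same underlying tension — the defining form of $E(A)$ is not $J$-compatible while the pullback of a near-spherical form by a $\mathbb{C}$-linear $M$ is nearly so — but the chord comparison extracts exactly one scalar quantity, whereas your spectral argument controls the whole matrix. Your version would be the one to reach for if, say, $M$ were only $\mathbb{R}$-linear and one wanted to quantify how far it is from $\mathbb{C}$-linear, but here it buys nothing over the two-chord trick.

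One small stylistic point: you deduce $A=0$ via ``$\partial E(A)$ is spherical'' plus Webster's theorem, whereas the paper quotes Hirachi's theorem directly in the form ``no log singularity and $|A|$ small $\Rightarrow A=0$,'' skipping the intermediate step. Either reading of \cite{Hirachi93} is fine.
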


\begin{proof}[Proof of Theorem \ref{local rigidity}]
	We shall need the following lemma, where the notation $\mathbb{B}^n_{r}=\{ z\in \mathbb{C}^n: |z|<r \}$ is used.
	
	\begin{lemma}\label{Hausdorff distance lemma}
		Let $\Omega \subset \mathbb{C}^n$ be a convex open set. For any $\varepsilon\in (0,\frac{1}{2})$, if
$d_H(\Omega, \mathbb{B}^n)<\varepsilon$, then $\mathbb{B}^n_{1-\varepsilon}\subset \Omega \subset \mathbb{B}^n_{1+\varepsilon}.
$
	\end{lemma}
	
	\begin{proof}
		If $d_H(\Omega, \mathbb{B}^n)<\varepsilon$, then the definition of Hausdorff distance yields that $\Omega$ is contained in the $\varepsilon$ neighborhood of $\mathbb{B}^n$, i.e., $\Omega \subset \mathbb{B}^n_{1+\varepsilon}$. It remains to prove the other containment $\mathbb{B}^n_{1-\varepsilon}\subset \Omega$. Since $\mathbb{B}^n$ and $\Omega$ are both bounded, nonempty and convex, by Theorem 18 in \cite{Wi07} we have $d_H(\mathbb{B}^n,\Omega)=d_H(\partial\mathbb{B}^n, \partial\Omega)$. It follows that $\partial\Omega$ is contained in the $\varepsilon$ neighborhood of $\partial\mathbb{B}^n$, i.e., $\partial\Omega\subset \{z\in \mathbb{C}^n: 1-\varepsilon<|z|<1+\varepsilon \}$. It follows that
		\begin{equation}\label{empty intersection}
			\partial\Omega \cap \mathbb{B}^n_{1-\varepsilon} =\varnothing.
		\end{equation}
Suppose $\mathbb{B}^n_{1-\varepsilon} \not\subset \Omega$. Since $\mathbb{B}^n_{1-\varepsilon}$ is path connected, \eqref{empty intersection} implies $\mathbb{B}^n_{1-\varepsilon} \subset \overline{\Omega}^{\complement}$ (the complement of $\overline{\Omega}$). Let $\Omega_{\varepsilon}$ be the $\varepsilon$ neighborhood of $\Omega$. Then it follows that
		\begin{equation*}
			\Omega_{\varepsilon} \subset \mbox{the $\varepsilon$ neighborhood of $(\mathbb{B}^n_{1-\varepsilon})^{\complement}$} = (\mathbb{B}^n_{1-2\varepsilon})^{\complement}.
		\end{equation*}
On the other hand, by the condition $d_H(\Omega,\mathbb{B}^n)<\varepsilon$ we get $\mathbb{B}^n\subset \Omega_{\varepsilon}$. It follows that $\mathbb{B}^n \subset (\mathbb{B}^n_{1-2\varepsilon})^{\complement}$, which is a contradiction for any $\varepsilon\in (0,\frac{1}{2})$.
	\end{proof}

Recall the real ellipsoid $E(A)$ is defined in \eqref{real ellipsoid eq} for $A$ in the parameter space $\mathcal{A}$ as in \eqref{ellipsoid parameter space}.
We introduce an equivalence relation between two domains in $\mathbb{C}^n$:
\begin{equation*}
	\Omega_1\sim \Omega_2 \quad \mbox{ if $\Omega_1=\Phi(\Omega_2)$ for some complex affine transformation $\Phi$ in $\mathbb{C}^n$}.
\end{equation*}

We collect all real ellipsoids $E(A)$ and their affine images by setting
\begin{equation*}
	\mathcal{E}=\{\Omega\subset \mathbb{C}^n: \Omega=\Phi(E(A)) \mbox{ for some complex affine transformation $\Phi$ of $\mathbb{C}^n$ and some $A\in \mathcal{A}$} \}.
\end{equation*}
Note that by a theorem of Webster (see Corollary 5.7 in \cite{We}) for any $\Omega\in \mathcal{E}$, there exists a unique $A\in \mathcal{A}$ such that $\Omega\sim E(A)$.

\begin{prop}\label{stability of closedness to ball lemma}
	Suppose $\Omega\sim E(A)$ for some $A=(A_1,\cdots, A_n)\in \mathcal{A}$.
	For any $\varepsilon\in (0,\frac{1}{2})$, if $d_H(\Omega, \mathbb{B}^n)<\varepsilon$, then $|A_j|\leq \frac{\varepsilon}{1+\varepsilon^2}$ for $1\leq j\leq n$.
\end{prop}

\begin{proof}
	Denoting $x_j=\Real z_j$ and $y_j=\Imaginary z_j$, we can rewrite the defining function of $E(A)$ as
	
		$$f_A=1-|z|^2-\sum_{j=1}^n A_j(z_j^2+\oo{z_j}^2)
		=1-\sum_{j=1}^n (1+2A_j)x_j^2-\sum_{j=1}^n(1-2A_j)y_j^2.$$
	
	For any $p\in \mathbb{C}^n$, we set
	\begin{equation*}
		L_{p, x_n}=\{ \lambda (0,\cdots, 0, 1)^t+p\in E(A): \lambda \in \mathbb{R} \}.
	\end{equation*}
	Thus, $L_{p,x_n}$ (if nonempty) stands for the longest real line segment through $p$ along the $x_n$ direction and lying in $E(A)$. For any real line segment $L$, we denote its length by $\text{length}(L)$.
	
	Let $\Phi(z)= Mz+\xi$ be a complex affine transformation such that $\Omega=\Phi(E(A))$, where $M\in \text{GL}(n, \mathbb{C})$ and $\xi \in \mathbb{C}^n$. Then $\Phi(L_{p,x_n})$ is a real line segment lying in $\Omega$ and
	\begin{equation*}
		\length(\Phi(L_{p,x_n}))=|M_n|\cdot\length(L_{p,x_n}),
	\end{equation*}
	where $M_n$ is the last column vector in $M$. In particular, for any two points $p_1, p_2 \in \mathbb{C}^n$,
	\begin{equation*}
	 \length(\Phi(L_{p_1,x_n}))\leq \length(\Phi(L_{p_2,x_n})),	\mbox{whenever} \length(L_{p_1,x_n})\leq \length(L_{p_2,x_n}).
	\end{equation*}
By varying the point $p\in \mathbb{C}^n$, we get a family of parallel line segments (if nonempty), $\{L_{p,x_n}\}_{p\in \mathbb{C}^n}$, lying in $E(A)$. Among this family, $L_{0,x_n}$ achieves the largest length, as we can see from the expression of $f_A$.
	\begin{equation*}
		\length(L_{0,x_n})=\sup_{p\in \mathbb{C}^n} \length(L_{p,x_n})=\frac{2}{\sqrt{1+2A_{n}}}.
	\end{equation*}
	Therefore, $\Phi(L_{0,x_n})$ has the largest length among the family $\{\Phi(L_{p,x_n})\}_{p\in \mathbb{C}^n}$.
	\begin{equation*}
		\length(\Phi(L_{0,x_n}))=\sup_{p\in \mathbb{C}^n} \length(\Phi(L_{p,x_n}))=\frac{2|M_n|}{\sqrt{1+2A_{n}}}.
	\end{equation*}
On the other hand, we note the family $\{\Phi(L_{p,x_n})\}_{p\in \mathbb{C}^n}$ exhausts all line segments that are parallel to $\Phi(L_{0,x_n})$ and lying in $\Omega$. Consequently, $\Phi(L_{0,x_n})$ has the largest length among all line segments that are parallel to $\Phi(L_{0,x_n})$ and in $\Omega$.
	
Since $\Omega\sim E(A)$, we see $\Omega$ is convex. Furthermore, since $d_H(\Omega,\mathbb{B}^n)<\varepsilon$, by Lemma \ref{Hausdorff distance lemma} we get $\mathbb{B}^n_{1-\varepsilon}\subset \Omega \subset \mathbb{B}^n_{1+\varepsilon}$. Thus,
	\begin{equation}\label{length in xn direction}
		2(1-\varepsilon)\leq \length(\Phi(L_{0,x_n}))=\frac{2|M_n|}{\sqrt{1+2A_{n}}}
		\leq 2(1+\varepsilon).
	\end{equation}
	
	Similarly, by considering the real line segment lying in $E(A)$ along $y_n$ direction
	\begin{equation*}
		L_{p, y_n}=\{ \lambda (0,\cdots, 0, \sqrt{-1})^t +p\in E(A): \lambda \in \mathbb{R} \},
	\end{equation*}
	we can prove $\Phi(L_{0, y_n})$ has the largest length among all line segments that are parallel to $\Phi(L_{0, y_n})$ and lying in $\Omega$. Therefore,
	\begin{equation}\label{length in yn direction}
		2(1-\varepsilon)\leq \length(\Phi(L_{0,x_n}))=\frac{2|M_n|}{\sqrt{1-2A_{n}}}
		\leq 2(1+\varepsilon).
	\end{equation}
	
	Combing the inequalities \eqref{length in xn direction} and \eqref{length in yn direction}, we obtain
	\begin{equation*}
		\frac{1-\varepsilon}{1+\varepsilon}\leq \frac{\sqrt{1-2A_n}}{\sqrt{1+2A_n}} \leq \frac{1+\varepsilon}{1-\varepsilon}.
	\end{equation*}
	A straightforward computation shows $|A_n| \leq \frac{\varepsilon}{1+\varepsilon^2}$. Hence the proof is completed.
\end{proof}

We are now ready to prove Theorem \ref{local rigidity}. Note that the "if" implication follows directly from the transformation formula for Bergman kernels under biholomorphisms. It is sufficient to prove the "only if" implication.

By Theorem \ref{total degree thm}, $G=\Phi(E(A))$ for some complex affine transformation $\Phi$ of $\mathbb{C}^n$ and some real ellipsoid $E(A)$ as in \eqref{real ellipsoid eq} with $A\in \mathcal{A}$. Suppose $d_H(G,\mathbb{B}^n)<\delta$ for some $\delta \in (0,\frac{1}{2})$. Then Proposition \ref{stability of closedness to ball lemma} implies that $|A_j|\leq \frac{\delta}{1+\delta^2}$ for $1\leq j\leq n$. We also note that since $K$ is assumed to be algebraic, by the argument of Section 5 in \cite{EbenfeltXiaoXu2020algebraicity}, $K$ has no logarithmic singularity in its Fefferman asymptotic expansion (meaning the logarithmic term vanishes to infinite order at $\partial G$). In \cite{Hirachi93}, Hirachi has proved that the Bergman kernel of a real ellipsoids $E(A)$ has no logarithmic singularity in its Fefferman expansion if and only if $A=0$, provided that all $A_j$ for $1\leq j\leq n$ are sufficiently small. So if we choose $\delta$ to be small enough, then $E(A)= \mathbb{B}^n$ (i.e., $A=0$) and thus $G$ is $\mathbb{B}^n$ after the complex affine transformation $\Phi$ of $\mathbb{C}^n$.
\end{proof}

For our final result,  as before when the Bergman kernel $K$ is rational, we write $K=\frac{p}{q}$ for some polynomials $p$ and $q$ such that $\gcd(p,q)=1$. As in Remark \ref{rmk rational}, the total degree of $K$ is then given by $\max\{\deg q+1, \deg p \}$. Thus, part (a) in Theorem \ref{total degree thm} implies that $\max\{\deg q, \deg p \}\geq 2n+2$. Moreover, if $\max\{\deg q, \deg p \}= 2n+2$, then by part (a) in Theorem \ref{total degree thm} again, we conclude that the total degree of $K$ is $2n+3$. 
We therefore have the following corollary of Theorem \ref{total degree thm} and Theorem \ref{local rigidity}.

\begin{cor}
Let $G\subset \mathbb{C}^n$  $(n\geq 2)$ be a smoothly bounded  pseudoconvex domain and $K$  the Bergman kernel of $G$. If $K$ is rational, then the following hold:
	\begin{itemize}
		\item [(a)] The rational degree of $K$ is at least $2n+2$.
		\item [(b)] If the rational degree of $K$ equals $2n+2$, then $G$ is a real ellipsoid up to a complex affine transformation of $\mathbb{C}^n$.
	\end{itemize}
Moreover, if we in addition assume the Hausdorff distance $d_H(G, \mathbb{B}^n)$ is sufficiently small, then we have
	\begin{itemize}
		\item[(c)] 	The rational degree of $K$ equals $2n+2$ if and only if $G$ is the unit ball up to a complex affine transformation of $\mathbb{C}^n$.
	\end{itemize}
\end{cor}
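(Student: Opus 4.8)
The plan is to derive all three parts directly from Theorem~\ref{total degree thm} and Theorem~\ref{local rigidity}, the only extra input being the elementary relation between the total degree and the rational degree of a rational function recorded in Remark~\ref{rmk rational}. Write $K=\frac{p}{q}$ with $p,q$ real polynomials in $(\Real z,\Imaginary z)$ and $\gcd(p,q)=1$, so that the total degree of $K$ is $\max\{1+\deg q,\deg p\}$ and the rational degree is $\max\{\deg q,\deg p\}$. For part (a), Theorem~\ref{total degree thm}(a) gives $\max\{1+\deg q,\deg p\}\ge 2n+3$: if $\deg p\ge 2n+3$ then the rational degree is at least $2n+3$, and otherwise $\deg q\ge 2n+2$ so the rational degree is at least $2n+2$; either way the rational degree is $\ge 2n+2$. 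For part (b), if the rational degree equals $2n+2$ then $\deg p\le 2n+2$ and $\deg q\le 2n+2$, hence the total degree is $\le\max\{2n+3,2n+2\}=2n+3$; combined with Theorem~\ref{total degree thm}(a), the total degree is exactly $2n+3$, and then Theorem~\ref{total degree thm}(b) identifies $G$ as a real ellipsoid $E(A)$, $A\in\mathcal{A}$, up to a complex affine transformation of $\mathbb{C}^n$.

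For part (c), the same estimate shows that rational degree $=2n+2$ again forces the total degree to be $2n+3$; since $K$ rational is in particular algebraic, Theorem~\ref{local rigidity} (applicable because $d_H(G,\mathbb{B}^n)$ is assumed small) then yields that $G$ is the unit ball up to a complex affine transformation. For the converse implication, suppose $\Psi(z)=Mz+\xi$ is a complex affine transformation with $\Psi(G)=\mathbb{B}^n$. The Bergman transformation law gives
\[
K_G(z,\bar z)=|\det M|^2\,K_{\mathbb{B}^n}\bigl(\Psi(z),\overline{\Psi(z)}\bigr)=\frac{c}{\bigl(1-|\Psi(z)|^2\bigr)^{n+1}},\qquad c=\frac{n!}{\pi^n}\,|\det M|^2>0.
\]
Now $1-|\Psi(z)|^2$ is a real polynomial whose degree-two part equals $-|Mz|^2$, a negative definite Hermitian form since $M$ is invertible, so it has total degree exactly $2$; therefore $\bigl(1-|\Psi(z)|^2\bigr)^{n+1}$ has total degree $2n+2$, and since the numerator $c$ is a nonzero constant this representation of $K_G$ is already reduced to lowest terms. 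Hence the rational degree of $K_G$ is $\max\{0,2n+2\}=2n+2$, which completes part (c).

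No serious obstacle is expected: the corollary is a formal consequence of Theorem~\ref{total degree thm} and Theorem~\ref{local rigidity}. The only points deserving a moment's care are the boundary cases of the arithmetic passing between ``total degree'' and ``rational degree,'' and, in the converse direction of (c), the verification that $\bigl(1-|\Psi(z)|^2\bigr)^{n+1}$ genuinely has degree $2n+2$ and shares no common factor with the constant numerator.
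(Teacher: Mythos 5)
Your proof is correct and takes essentially the same route as the paper: parts (a) and (b) follow from the arithmetic passing between total degree and rational degree combined with Theorem~\ref{total degree thm}, and part (c) from Theorem~\ref{local rigidity}. Your explicit computation for the ``if'' direction of (c), showing that $K_G=c/(1-|\Psi(z)|^2)^{n+1}$ is in lowest terms with rational degree exactly $2n+2$, is a worthwhile addition: the paper only gestures at the transformation law, and ``total degree $=2n+3$'' by itself would not force ``rational degree $=2n+2$'' without observing that the numerator is a nonzero constant.
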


\bibliographystyle{plain}
\bibliography{references}

\end{document}